\crefname{hypothesis}{Hypothesis}{Hypotheses}
\title{Quasilinearization Methods for Nonlocal Fully-Nonlinear Parabolic Systems}
\author{Qian Lei\thanks{School of Physical and Mathematical Sciences, Nanyang Technological University, Singapore 
		(\email{leiq0002@e.ntu.edu.sg}).}
	\and Chi Seng Pun\thanks{Corresponding author. School of Physical and Mathematical Sciences, Nanyang Technological University, Singapore 
		(\email{cspun@ntu.edu.sg}, \url{https://personal.ntu.edu.sg/cspun/}).}
	}
\begin{document}

\maketitle

\begin{abstract}
  In this paper, we propose quasilinearization methods that convert nonlocal fully-nonlinear parabolic systems into the nonlocal quasilinear parabolic systems. The nonlocal parabolic systems serve as important mathematical tools for modelling the subgame perfect equilibrium solutions to time-inconsistent dynamic choice problems, which are motivated by the study of behavioral economics. Different types of nonlocal parabolic systems were studied but left behind the fully-nonlinear case and the connections among them. This paper shows the equivalence in solvability between nonlocal fully-nonlinear and the associated quasilinear systems, given their solutions are regular enough. Moreover, we establish the well-posedness results for the nonlocal quasilinear parabolic systems, so do that for the nonlocal fully-nonlinear parabolic systems. The quasilinear case alone is interesting in its own right from mathematical and modelling perspectives.
\end{abstract}

\begin{keywords}
  Quasilinearization, Nonlocal Parabolic Systems, Existence and Uniqueness, Time Inconsistency, Mathematics of Behavioral Economics
\end{keywords}

\begin{AMS}
  35A01, 35A02, 35Q93, 35Q91
\end{AMS}

\section{Introduction} \label{sec:intro}
Conventional (or local) parabolic partial differential equations (PDEs) or systems of PDEs are closely connected with theories of stochastic controls and stochastic differential equations (SDEs). Specifically, by letting the mesh size of time interval tends to zero, the recursive equations, characterized by the dynamic programming (DP) technique, normally converge to a fully-nonlinear second-order PDE (also known as Hamilton--Jacobi--Bellman (HJB) equation), which identifies the optimal stochastic controls and the corresponding value function. In the context of SDEs, the well-known Feynman--Kac formula reveals that the adapted solutions of decoupled/coupled system of forward and backward SDEs (FBSDEs) have PDE representations. However, the link between stochastic controls and PDEs/SDEs conventionally requires time consistency, in line with Bellman's principle of optimality that implies local optima to be global optimal. We refer the readers to \cite{Yong1999} for greater details in how these three field interact.

As dynamic choice problems are being further studied, researchers find substantial empirical evidences on time inconsistency (TIC) in decision making; see \cite{Thaler1981}. When a decision-maker's preferences vary over time, being reference- or state-dependent, Bellman's principle of optimality is violated. Some well-known examples are the hyperbolic discounting \cite{Laibson1997,Frederick2002} and prospect theory \cite{Kahneman1979} in behavioural economics. For the time-inconsistent (TIC) stochastic control problems, rigorous mathematical treatment was only available about one decade ago; see \cite{He2021} for a succinct account of the latest progress. The prevailing approach, initiated in \cite{Strotz1955} and developed in \cite{Bjoerk2014,Bjoerk2017}, finds the subgame perfect equilibrium of the TIC dynamic choice problems. Fortunately, we can still obtain time-sensitive recursive equations, characterized by an extended DP instead, connecting the subproblems in time. Consequently, the TIC stochastic control problems can be characterized by a flow of fully-nonlinear second-order parabolic PDEs and the flow is connected via the equilibrium controls, resulting in nonlocality. However, such nonlocal fully-nonlinear parabolic PDEs or systems are underexplored mathematically, compared to the local ones. The aim of this paper is to propose quasilinearization methods, by which we study the well-posedness of a class of $2r$-order nonlocal parabolic PDE system for the unknown $\bm{u}:\Delta[0,T]\times\mathbb{R}^d\mapsto\mathbb{R}^m$, which takes the form
\begin{equation*}
	\left\{
	\begin{array}{rcl}
		\bm{u}_s(t,s,y) & = & \bm{F}\big(t,s,y,\left(\partial_I\bm{u}\right)_{|I|\leq 2r}(t,s,y),  \left(\partial_I\bm{u}\right)_{|I|\leq 2r}(s,s,y)\big), \\
		\bm{u}(t,0,y) & = & \bm{g}(t,y),\hfill 0\leq s\leq t\leq T,\quad y\in\mathbb{R}^d,
	\end{array}
	\right. 
\end{equation*} 
where $\Delta[0,T]=\{(t,s):0\leq s\leq t\leq T\}$, $I=(i_1,\ldots,i_j)$ is a multi-index with $j=|I|$, $\partial_I\bm{u}:=\frac{\partial^{|I|}\bm{u}}{\partial y_{i_1}\cdots\partial y_{i_d}}$. The systems above characterize a series of problems indexed by $t$ while they are connected via its dependence on $\left(\partial_I\bm{u}\right)_{|I|\leq 2r}(s,s,y)$. This kind of PDE problems have drawn attention of many researchers but they are limited to some special types of nonlocal second-order systems; see \cite{Yong2012,Wei2017,Hernandez2020,Mei2020} and the next section for the technical explanations on their specialties via a comprehensive introduction of different types of nonlocal parabolic systems. To the best of our knowledge, only \cite{Lei2021,Lei2021a} are concerned about the nonlocal fully-nonlinear parabolic PDEs and systems, investigated with a linearization approach.

We refer the readers to \cite{Lei2021,Lei2021a} for the connections between the nonlocal fully-nonlinear parabolic PDEs (resps. systems) and TIC stochastic control problems (resp. TIC stochastic differential games). In addition to the inspirations from TIC dynamic choice problems, the study on nonlocal PDEs/systems provides a new insight into solving a flow of FBSDEs (also called backward stochastic Volterra integral equation (BSVIE)) via a smooth solution $\bm{u}(\cdot,\cdot,\cdot)$ of nonlocal systems; see \cite{Wang2019,Wang2020,Hamaguchi2020,Lei2021a}. As for the Feynman--Kac formulas for a flow of FBSDEs, there has been lack of unified and general treatment for the PDE-side representations. The limited knowledge of nonlocal PDEs/systems has restricted us from solving the flow of FBSDEs by combining the solution of nonlocal systems and the adapted solution of forward SDEs in a similar manner of the classical four-step numerical scheme in \cite{Ma1994}. Hence, the study of nonlocal fully-nonlinear parabolic systems, especially on its well-posedness, is relevant and significant.  

Fully nonlinearity in PDE systems is difficult to deal with. In the classical PDE theory, \cite{Eidelman1969} introduces a so-called ``quasilinearization method" to degenerate (local) fully-nonlinear systems into more manageable quasilinear systems. On the other hand, \cite{Khudyaev1963,Sopolov1970} also utilize a variant of the quasilinearization to handle the fully nonlinearity of local systems. Although there are existing works on how to degenerate fully-nonlinear systems, it is still difficult to extend their methods from local to nonlocal cases, mainly because the degenerated nonlocal quasilinear systems for $\big(\bm{u},(\partial_I\bm{u})_{|I|=1}\big)$ or for $\big(\bm{u}_t,\bm{u}_s\big)$ contain possibly the diagonal terms $\left(\partial_I\bm{u}\right)_{|I|= 2r}(s,s,y)$. Thanks to the recent advances in \cite{Lei2021a}, where the well-posedness and Schauder's estimates of solutions $\bm{u}$ of nonlocal linear systems are established, we are able to propose new quasilinearization methods for nonlocal parabolic systems.

The major contribution of this paper is threefold. First, we extend the quasilinearization methods of \cite{Eidelman1969} from local to nonlocal systems. We also prove the equivalence in solvability between the original nonlocal fully-nonlinear systems and the degenerated nonlocal quasilinear systems. Second, we show the well-posedness of a general class of nonlocal quasilinear parabolic systems with fixed-point arguments. The quasilinear case is interesting in its own right, especially when the nonlocal parabolic systems are used to model TIC problems where the diffusion is free of controls. Together with the first point, we also obtain the well-posedness of nonlocal fully-nonlinear parabolic systems, rendering an alternative proof to \cite{Lei2021,Lei2021a}. Third, a variant of quasilinearization, in line with that of \cite{Khudyaev1963,Sopolov1970}, is also extended from local to nonlocal systems, and we illustrate how it can be used to show the well-posedness results. Two quasilinearization methods presented in this paper are different in which derivatives of the unknowns (with respect to spatial or temporal variables) are supplemented to the systems. As a result, the regularity assumptions and the regularity of the solution are different. They are also the key difference between the quasilinearization methods in this paper and the linearization method in \cite{Lei2021,Lei2021a}, i.e. when we have more restrictive regularity assumptions on system's specifications, we achieve improved regularity of the solution.

The remainder of this paper is organized as follows. Section \ref{sec:preliminaries} introduces appropriate norms and Banach spaces for the nonlocality of our interest and defines different types of nonlocal parabolic systems. The preliminary results of the well-posedness and a Schauder-type priori estimate of the solutions to nonlocal linear systems are presented. Section \ref{sec:quasilinearization} is devoted to study the quasilinearization method in the nonlocal setting. Two main results are presented: 1) the equivalence between nonlocal fully-nonlinear systems and the degenerated quasilinear systems; 2) the existence and uniqueness of nonlocal quasilinear systems by fixed-point arguments. Furthermore, a variant of the quasilinearization method is proposed and studied. Finally, Section \ref{sec:conclusion} concludes.


\section{Preliminaries for the Nonlocality}  \label{sec:preliminaries} 
In this section, we define suitable norms and Banach spaces for nonlocal problems as well as the definitions of nonlocal linear, semilinear, quasilinear, and fully-nonlinear parabolic systems. Moreover, we present some preliminary results about the nonlocal linear systems, which will lay a solid foundation for the analysis of nonlocal nonlinear systems.


\subsection{Norms and Banach Spaces for the Nonlocality} \label{sec:normspace}
We first review some definitions of spaces of H\"{o}lder continuous functions in \cite{Ladyzhanskaya1968}. Then we will adapt them to suit our nonlocal parabolic systems. 

Given $0\leq a\leq b\leq T$, we denote by $C([a,b]\times\mathbb{R}^d;\mathbb{R})$ the set of all the continuous and bounded real functions in $[a,b]\times\mathbb{R}^d$ endowed with the supremum norm $|\cdot|^{\infty}_{[a,b]\times\mathbb{R}^d}$. Wherever no confusion arises, we write $|\cdot|^\infty$ instead of $|\cdot|^{\infty}_{[a,b]\times\mathbb{R}^d}$. Next, we denote by  $C^{\frac{l}{2r},l}({[a,b]\times\mathbb{R}^d})$ the Banach space of functions $\varphi(s,y)$ that are continuous in $[a,b]\times\mathbb{R}^d$, their derivatives of the form $D^i_sD^j_y\varphi$ for $2ri+j<l$ exist, and have a finite norm defined by
\begin{eqnarray*}
	|\varphi|^{(l)}_{[a,b]\times\mathbb{R}^d} &=&\sum_{k\leq[l]}\sum_{2ri+j=k}\left|D^i_sD^j_y\varphi\right|^\infty+\sum_{2ri+j=[l]}\langle D^i_sD^j_y\varphi\rangle^{(l-[l])}_y \\
	&&+\sum_{0<l-2ri-j<2r}\langle D^i_sD^j_y\varphi\rangle^{(\frac{l-2ri-j}{2r})}_s,
\end{eqnarray*}
where $r$ is always a positive integer, $l$ a non-integer positive number with $[l]$ being its integer part, and for $0<\alpha<1$,
\begin{equation*}
	\langle\varphi\rangle^{(\alpha)}_y:=\sup\limits_{\begin{subarray}{c} a\leq s\leq b \\ y,y^\prime\in\mathbb{R}^d\end{subarray}}\frac{|\varphi(s,y)-\varphi(s,y^\prime)|}{|y-y^\prime|^\alpha}, \quad
	\langle\varphi\rangle^{(\alpha)}_s:=\sup\limits_{\begin{subarray}{c} a\leq s< s^\prime\leq b \\ y\in\mathbb{R}^d\end{subarray}}\frac{|\varphi(s,y)-\varphi(s^\prime,y)|}{|s-s^\prime|^\alpha}.
\end{equation*}
Moreover, wherever no confusion arises, we do not distinguish between $|\varphi|^{(l)}_{[a,b]\times\mathbb{R}^d}$ and $|\varphi|^{(l)}_{\mathbb{R}^d}$ for functions $\varphi(y)$ independent of $s$. 

After reviewing the conventional H\"{o}lder spaces, we modify them to fulfill requirements of studying nonlocal problems. Before that, we need to take note of the following two essential features of nonlocal (or TIC) problems: 1) the two temporal arguments $(t,s)$ has an order relation. In the context of TIC control problems and Feynman--Kac formula of flow of FBSDEs, we usually consider backward problems where $t$ and $s$ always represent the initial time and the running time, respectively. Hence, it is required that $0\leq t\leq s\leq T$. Thanks to the symmetry between forward and backward problems (see \cite[Proposition 3.1]{Lei2021a}), we consider a reverse order relation, i.e. $0\leq s\leq t\leq T$, in our study of nonlocal forward (initial-value) systems. Noting the specialty of nonlocal problems, it suffices to define functions and study the problems over a triangle region $\Delta[0,\delta]:=\left\{(t,s)\in[0,\delta]^2:~0\leq s\leq t\leq \delta\right\}$ instead of the whole rectangle region $[0,\delta]^2$; 2) for the functions $\varphi(t,s,y)$ appearing in nonlocal systems, including coefficients of differential operators, the inhomogeneous term, initial data, and potential solutions, it is desired to preserve sufficient smoothness of $\varphi$ with respect to the variables $s$ and $y$ such that most of the classical regularity results of local systems are available for nonlocal ones. As for the smoothness regarding the external time variable $t$, it is required that $\varphi(t,s,y)$ is continuously differentiable with respect to $t$ in this paper. As \cite{Lei2021a} show, the requirement is a key ingredient to promise the well-posedness of nonlocal linear systems.

Now, we are ready to define the norms and Banach spaces for nonlocal systems of unknown vector-valued functions $\bm{u}(t,s,y)=(\bm{u}^1(t,s,y),\bm{u}^2(t,s,y),\cdots,\bm{u}^m(t,s,y))^\top$. For any $t$ and $\delta$ such that $0\leq t\leq\delta\leq T$, we introduce the following norms 
\begin{eqnarray*}
	[\bm{u}]^{(l)}_{[0,\delta]} & := & \sup\limits_{t\in[0,\delta]}\sum\limits_{a\leq m}\left\{|\bm{u}^a(t,\cdot,\cdot)|^{(l)}_{[0,t]\times\mathbb{R}^d}\right\}, \\
	\lVert\bm{u}\rVert^{(l)}_{[0,\delta]} & := & \sup\limits_{t\in[0,\delta]}\sum\limits_{a\leq m}\left\{|\bm{u}^a(t,\cdot,\cdot)|^{(l)}_{[0,t]\times\mathbb{R}^d}+|\bm{u}^a_t(t,\cdot,\cdot)|^{(l)}_{[0,t]\times\mathbb{R}^d}\right\},
\end{eqnarray*}
which induce the following normed spaces, respectively,  
\begin{eqnarray*}
	\bm{\Theta}^{(l)}_{[0,\delta]} & := & \left\{\bm{u}(\cdot,\cdot,\cdot)\in C(\Delta[0,\delta]\times\mathbb{R}^d;\mathbb{R}^m):[\bm{u}]^{(l)}_{[0,\delta]}<\infty\right\}, \\
	\bm{\Omega}^{(l)}_{[0,\delta]} & := & \left\{\bm{u}(\cdot,\cdot,\cdot)\in C(\Delta[0,\delta]\times\mathbb{R}^d;\mathbb{R}^m): \Vert\bm{u}\rVert^{(l)}_{[0,\delta]}<\infty\right\}, 
\end{eqnarray*}
in which $C(\Delta[0,\delta]\times\mathbb{R^d};\mathbb{R}^m)$ is the set of all continuous and bounded $\mathbb{R}^m$-valued functions defined in $\Delta[0,\delta]\times\mathbb{R}^d$. Both $\bm{\Theta}^{(l)}_{[0,\delta]}$ and $\bm{\Omega}^{(l)}_{[0,\delta]}$ are Banach spaces under $[\bm{u}]^{(l)}_{[0,\delta]}$ and $\lVert\bm{u}\rVert^{(l)}_{[0,\delta]}$, respectively.


\subsection{Nonlocal Parabolic Systems} \label{sec:nonlocalsystems}
In this subsection, we clarify some notations and terminologies about nonlocal parabolic systems. Specifically, we categorize different types of nonlocal parabolic systems, especially the ones not fully-nonlinear. Then we restate some key mathematical results for nonlocal linear systems.  

\begin{definition}
	For a positive integer $r$ and $T\ge 0$, a $2r$-order nonlocal parabolic system (of PDEs) for the unknown $\bm{u}:\Delta[0,T]\times\mathbb{R}^d\mapsto\mathbb{R}^m$ takes the form
	\begin{equation} \label{Nonlocal fully-nonlinear systems}
		\left\{
		\begin{array}{rcl}
			\bm{u}_s(t,s,y) & = & \bm{F}\big(t,s,y,\left(\partial_I\bm{u}\right)_{|I|\leq 2r}(t,s,y),  \left(\partial_I\bm{u}\right)_{|I|\leq 2r}(s,s,y)\big), \\
			\bm{u}(t,0,y) & = & \bm{g}(t,y),\hfill 0\leq s\leq t\leq T,\quad y\in\mathbb{R}^d,  
		\end{array}
		\right. 
	\end{equation} 
	where $\Delta[0,T]=\{(t,s):0\leq s\leq t\leq T\}$, $I=(i_1,\ldots,i_j)$ be a multi-index with $j=|I|$, $\partial_I\bm{u}:=\frac{\partial^{|I|}\bm{u}}{\partial y_{i_1}\cdots\partial y_{i_d}}$, and the nonlinearity 
	\begin{equation*}
		\bm{F}:\Delta[0,T]\times\mathbb{R}^d\times\mathbb{R}^m\times\mathbb{R}^{md}\times\cdots\times\mathbb{R}^{md^{2r}}\times\mathbb{R}^m\times\mathbb{R}^{md}\times\cdots\times\mathbb{R}^{md^{2r}}\mapsto\mathbb{R}^m
	\end{equation*}
	and the initial condition $\bm{g}:[0,T]\times\mathbb{R}^d\mapsto\mathbb{R}^m$ are both given.
\end{definition}
Here, the nonlocality refers to the (diagonal) dependence of the system on the partial derivatives $\partial_I\bm{u}(s,s,y)$ (evaluated at $(s,s,y)$, i.e., diagonal line of $[0,T]^2$). When $\bm{F}$ is independent of the ``diagonal" terms $\left(\partial_I\bm{u}\right)_{|I|\leq 2r}(s,s,y)$, \eqref{Nonlocal fully-nonlinear systems} is reduced to a family of conventional (local) higher order parabolic systems parameterized by $t$.

\begin{definition}
	\noindent (1) The nonlocal parabolic system \eqref{Nonlocal fully-nonlinear systems} is called \textbf{linear} if it admits the form  
	\begin{equation} \label{Nonlocal linear systems}  
		\left\{
		\begin{array}{l}
			\bm{u}_s(t,s,y)=\sum\limits_{|I|\leq 2r}\bm{A}^{I}(t,s,y)\partial_I\bm{u}(t,s,y)+\sum\limits_{|I|\leq 2r}\bm{B}^{I}(t,s,y)\partial_I\bm{u}(s,s,y) \\
			\qquad \qquad \qquad +\bm{f}(t,s,y), \\
			\bm{u}(t,0,y)=\bm{g}(t,y),\hfill 0\leq s\leq t\leq T,\quad y\in\mathbb{R}^d. 
		\end{array}
		\right. 
	\end{equation} 
	The nonlocal linear system is homogeneous if $\bm{f}=0$. By introducing a so-called ``nonlocal differential operator" $\bm{L}=\bm{A}\bm{u}+\bm{B}\left(\bm{u}|_{t=s}\right)$, \eqref{Nonlocal linear systems} can be written as $\bm{u}_s=\bm{L}\bm{u}+\bm{f}$ with $\bm{u}|_{s=0}=\bm{g}$; 
	
	\noindent (2) The nonlocal system \eqref{Nonlocal fully-nonlinear systems} is \textbf{semilinear} if it admits the form  
	\begin{equation} \label{Nonlocal semilinear systems}  
		\left\{
		\begin{array}{l}
			\bm{u}_s(t,s,y)=\sum\limits_{|I|= 2r}\bm{A}^{I}(t,s,y)\partial_I\bm{u}(t,s,y)+\sum\limits_{|I|= 2r}\bm{B}^{I}(t,s,y)\partial_I\bm{u}(s,s,y) \\
			\qquad \qquad \qquad  
			+\bm{F}\big(t,s,y,\left(\partial_J\bm{u}\right)_{|J|\leq 2r-1}(t,s,y),\left(\partial_J\bm{u}\right)_{|J|\leq 2r-1}(s,s,y)\big), \\
			\bm{u}(t,0,y)=\bm{g}(t,y),\hfill 0\leq s\leq t\leq T,\quad y\in\mathbb{R}^d. 
		\end{array}
		\right. 
	\end{equation}
	
	\noindent (3) The nonlocal system \eqref{Nonlocal fully-nonlinear systems} is \textbf{quasilinear} if it admits the form
	\begin{equation} \label{Nonlocal quasilinear systems}  
		\left\{
		\begin{array}{l}
			\bm{u}_s(t,s,y)=\sum\limits_{|I|= 2r}\bm{A}^{I}\Big(t,s,y,\left(\partial_J\bm{u}\right)_{|J|\leq 2r-1}(t,s,y), \\
			\qquad \qquad \qquad \qquad \qquad \qquad \quad \left(\partial_J\bm{u}\right)_{|J|\leq 2r-1}(s,s,y)\Big)\partial_I\bm{u}(t,s,y) \\
			\qquad \qquad \qquad
			+\sum\limits_{|I|= 2r}\bm{B}^{I}\Big(t,s,y,\left(\partial_J\bm{u}\right)_{|J|\leq 2r-1}(t,s,y), \\
			\qquad \qquad \qquad \qquad \qquad \qquad \qquad ~~ \left(\partial_J\bm{u}\right)_{|J|\leq 2r-1}(s,s,y)\Big)\partial_I\bm{u}(s,s,y) \\
			\qquad \qquad \qquad 
			+\bm{F}\big(t,s,y,\left(\partial_J\bm{u}\right)_{|J|\leq 2r-1}(t,s,y),\left(\partial_J\bm{u}\right)_{|J|\leq 2r-1}(s,s,y)\big), \\
			\bm{u}(t,0,y)=\bm{g}(t,y),\hfill 0\leq s\leq t\leq T,\quad y\in\mathbb{R}^d. 
		\end{array}
		\right. 
	\end{equation} 
	
	\noindent (4) The nonlocal system \eqref{Nonlocal fully-nonlinear systems} is \textbf{fully-nonlinear} if its dependence on the highest order derivatives $\left(\partial_I\bm{u}\right)_{|I|=2r}(t,s,y)$ and/or $\left(\partial_I\bm{u}\right)_{|I|=2r}(s,s,y)$ is nonlinear. 
\end{definition}

It is noteworthy to distinguish the nonlocal semilinear systems \eqref{Nonlocal semilinear systems} and its special case:
\begin{equation} \label{Previous nonlocal examples}  
	\left\{
	\begin{array}{l}
		\bm{u}_s(t,s,y)=\sum\limits_{|I|= 2}\bm{A}^{I}(t,s,y)\partial_I\bm{u}(t,s,y)\\
		\qquad \qquad \qquad +\bm{F}\big(t,s,y,\left(\partial_J\bm{u}\right)_{|J|\leq 1}(t,s,y),\left(\partial_J\bm{u}\right)_{|J|\leq 1}(s,s,y)\big), \\
		\bm{u}(t,0,y)=\bm{g}(t,y),\hfill 0\leq s\leq t\leq T,\quad y\in\mathbb{R}^d,
	\end{array}
	\right. 
\end{equation}
which is studied in \cite{Yong2012,Wei2017,Hernandez2020,Mei2020,Wang2020,Hamaguchi2020}. Since there is no highest-order terms at $(s,s,y)$ in \eqref{Previous nonlocal examples}, i.e. $\left(\partial_I\bm{u}\right)_{|I|=2}(s,s,y)$, the authors of \cite{Yong2012,Wei2017,Wang2020} are able to construct a contraction via the solutions to conventional parameterized parabolic equations by replacing all lower order terms with pre-specified functions and obtain the well-posedness of \eqref{Previous nonlocal examples} with fixed-point arguments. However, the appearance of $\sum_{|I|= 2r}\bm{B}^{I}(t,s,y)\partial_I\bm{u}(s,s,y)$ in \eqref{Nonlocal semilinear systems} poses essential analytical challenges. Specifically, if a similar methodology is followed, we should consider the following mapping from $\bm{u}$ to $\bm{U}$ for nonlocal fully-nonlinear systems:
\begin{equation*} 
	\left\{
	\begin{array}{lr}
		\bm{U}_s(t,s,y)=\bm{F}\big(t,s,y,\left(\partial_I\bm{U}\right)_{|I|\leq 2r}(t,s,y),  \left(\partial_I\bm{u}\right)_{|I|\leq 2r}(s,s,y)\big), \\
		\bm{U}(t,0,y)=\bm{g}(t,y),\quad 0\leq s\leq t\leq T,\quad y\in\mathbb{R}^d.  
	\end{array}
	\right. 
\end{equation*} 
Noting that the inputs $\left(\partial_I\bm{u}\right)_{|I|\leq 2r}(s,s,y)$ are of the same order as the outputs $\left(\partial_I\bm{U}\right)_{|I|\leq 2r}(t,s,y)$, it is infeasible to prove the mapping to be a contraction. This technical difficulty motivates us to lower the order of derivatives in nonlocal fully-nonlinear systems. The quasilinearization serves this purpose in the literature.

In what follows, we will take advantage of the well-posedness results of \eqref{Nonlocal linear systems} to prove the existence and uniqueness of \eqref{Nonlocal semilinear systems} and \eqref{Nonlocal quasilinear systems}, and develop the quasilinearization methods with nonlocality to degenerate \eqref{Nonlocal fully-nonlinear systems} as \eqref{Nonlocal quasilinear systems} with the mathematical claims preserved.

\begin{definition}
	The nonlocal parabolic system \eqref{Nonlocal fully-nonlinear systems} is said to satisfy the \textbf{uniform ellipticity condition} if there exists $\lambda>0$ such that
	\begin{eqnarray}
		&& (-1)^{r-1}\sum_{a,b\leq m,|I|=2r}\partial_I \bm{F}^a_b(t,s,y,z)\xi_{i_1}\cdots\xi_{i_{2r}}v^av^b \geq \lambda|\xi|^{2r}|v|^2,  \label{Uniform ellipticity condition of local part} \\
		&&
		(-1)^{r-1}\sum_{a,b\leq m,|I|=2r}\left(\partial_I \bm{F}^a_b+\partial_I \overline{\bm{F}}^a_b\right)(t,s,y,z)\xi_{i_1}\cdots\xi_{i_{2r}}v^av^b \geq   \lambda|\xi|^{2r}|v|^2   \label{Uniform ellipticity condition of nonlocal part} 
	\end{eqnarray}
	hold uniformly with respect to $(t,s,y,z)$ for any $\xi=(\xi_1,\ldots,\xi_d)^\top\in\mathbb{R}^d$ and $v\in\mathbb{R}^m$, where $z\in \mathbb{R}^m\times\mathbb{R}^{md}\times\cdots\times\mathbb{R}^{md^{2r}}\times\mathbb{R}^m\times\mathbb{R}^{md}\times\cdots\times\mathbb{R}^{md^{2r}}$ represents the rest of arguments in $\bm{F}$ other than $(t,s,y)$ and $\partial_I\bm{F}^a_b$ (resp. $\partial_I\overline{\bm{F}}^a_b$) denotes the first order derivative of the $a$-th entry $\bm{F}^a$ of $\bm{F}$ with respect to the variable $\partial_I\bm{u}^b(t,s,y)$ (resp.  $\partial_I\bm{u}^b(s,s,y)$). 
\end{definition}

Compared with the classical uniform ellipticity condition, there is an additional inequality \eqref{Uniform ellipticity condition of nonlocal part} for nonlocal highest order term $\left(\partial_I\bm{u}\right)_{|I|=2r}(s,s,y)$. In some earlier works \cite{Yong2012,Wei2017,Wang2020}, since their $\partial_I \overline{\bm{F}}^a_b$ always equals to zero, the conditions \eqref{Uniform ellipticity condition of local part} and \eqref{Uniform ellipticity condition of nonlocal part} coincide. Given the two non-degenerated conditions, a variety of regularity results of PDEs still hold true in the nonlocal framework.  

Under some regularity assumptions on the coefficients, the inhomogeneous term $\bm{f}$, and the initial data $\bm{g}$, the nonlocal linear PDE \eqref{Nonlocal linear systems} is well-posed and the behaviours of its solutions are controlled by a Schauder-type estimate. These claims, proved in \cite{Lei2021a} and recapped below, establishes a foundation for the further analysis of nonlocal fully-nonlinear systems in the next section. 

\begin{theorem} \label{Schauder estimates}
	Consider a nonlocal linear system $\bm{u}_s=\bm{L}\bm{u}+\bm{f}$ with $\bm{u}|_{s=0}=\bm{g}$ given in \eqref{Nonlocal linear systems}. Suppose that the coefficients $\bm{A}^I$ and $\bm{B}^I$ of $\bm{L}$ satisfy the uniform ellipticity condition \eqref{Uniform ellipticity condition of local part}-\eqref{Uniform ellipticity condition of nonlocal part} and belong to $\bm{\Omega}^{{(k+\alpha)}}_{[0,T]}$, $\bm{f}\in\bm{\Omega}^{{(k+\alpha)}}_{[0,T]}$, and $\bm{g}\in\bm{\Omega}^{{(2r+k+\alpha)}}_{[0,T]}$ for an integer $k\leq 2r-1$. Then there exist $\delta>0$ and a unique $\bm{u}\in\bm{\Omega}^{{(2r+k+\alpha)}}_{[0,\delta]}$ satisfying \eqref{Nonlocal linear systems} in $\Delta[0,\delta]\times\mathbb{R}^d$. Furthermore, the following Schauder-type estimate holds 
	\begin{equation} \label{Estimates of solutions of nonlocal linear PDEs} 
		\lVert \bm{u}\rVert^{(2r+k+\alpha)}_{[0,\delta]}\leq C\left(\lVert \bm{f}\rVert^{(k+\alpha)}_{[0,\delta]}+\lVert \bm{g}\rVert^{(2r+k+\alpha)}_{[0,\delta]}\right),  
	\end{equation}
	where $C$ is a constant depending only on coefficients of the nonlocal differential operator $\bm{L}$. 
\end{theorem}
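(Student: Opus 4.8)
The plan is to marry the classical parabolic Schauder theory for local $2r$-order systems (as in \cite{Ladyzhanskaya1968}) with a contraction-mapping argument on a short time slab $\Delta[0,\delta]$, with essentially all of the difficulty concentrated in the highest-order nonlocal term $\sum_{|I|=2r}\bm{B}^I(t,s,y)\partial_I\bm{u}(s,s,y)$, which is of the same differential order as the unknown. \emph{Step 1 (rewriting the nonlocality).} For a fixed external time $t$ I would read \eqref{Nonlocal linear systems} as a classical parabolic system in $(s,y)\in[0,t]\times\mathbb{R}^d$ and split $\partial_I\bm{u}(s,s,y)=\partial_I\bm{u}(t,s,y)-\int_s^t\partial_I\bm{u}_t(\tau,s,y)\,d\tau$. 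The first piece merges with $\sum_{|I|=2r}\bm{A}^I\partial_I\bm{u}(t,s,y)$ into a principal part $\sum_{|I|=2r}(\bm{A}^I+\bm{B}^I)$, which is uniformly elliptic with the correct sign precisely by the second ellipticity condition \eqref{Uniform ellipticity condition of nonlocal part}; the second piece carries the explicit factor $|t-s|\le\delta$ and involves only the $2r$-th spatial derivatives of $\bm{u}_t$. Differentiating \eqref{Nonlocal linear systems} in $t$ (legitimate since all data are $C^1$ in $t$, and here $\partial_t[\partial_I\bm{u}(s,s,y)]=0$) produces the companion equation governing $\bm{p}:=\bm{u}_t$, with principal part $\sum\bm{A}^I$, initial datum $\bm{g}_t$, and $\int_s^t(\cdot)\,d\tau$-type couplings again. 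This yields a closed, genuinely parabolic system for the pair $(\bm{u},\bm{p})$ in which the only ``nonlocal'' coupling is through integral terms of size $O(\delta)$.

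\emph{Step 2 (the fixed-point map).} On $\Delta[0,\delta]$ I would define a map $\Phi$ on a closed ball of $\bm{\Omega}^{(2r+k+\alpha)}_{[0,\delta]}$, viewing its elements as pairs $(\bm{u},\bm{u}_t)$: given an iterate, freeze the $\int_s^t(\cdot)\,d\tau$-couplings using the previous iterate and, for every $t\in[0,\delta]$, solve the two resulting \emph{uncoupled} classical parabolic systems by the existence theorem of \cite{Ladyzhanskaya1968}, obtaining Schauder bounds in which $|\bm{u}(t,\cdot,\cdot)|^{(2r+k+\alpha)}_{[0,t]\times\mathbb{R}^d}$ is controlled by the $(k+\alpha)$-norm of the inhomogeneous term plus $|\bm{g}(t,\cdot)|^{(2r+k+\alpha)}_{\mathbb{R}^d}$, with constant independent of $t$. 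Self-mapping into the ball for $\delta$ small then uses three ingredients: these uniform-in-$t$ local estimates; the bound of a diagonal restriction $\bm{u}(s,s,y)$ in the $(k+\alpha)$- (resp.\ $(2r+k+\alpha)$-) norm by $[\bm{u}]^{(k+\alpha)}_{[0,\delta]}$ (resp.\ $\lVert\bm{u}\rVert^{(2r+k+\alpha)}_{[0,\delta]}$), which is exactly where the $\bm{u}_t$-component of the $\bm{\Omega}$-norm is indispensable, since $\partial_s[\bm{u}(s,s,y)]=(\bm{u}_s+\bm{u}_t)(s,s,y)$; and the $O(\delta)$ contribution of the integral couplings.

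\emph{Step 3 (contraction and conclusion).} For two iterates with common data, linearity makes the difference solve the same system with zero initial data, driven by the difference of the previous iterates only through the $O(\delta)$ couplings. Two gains conspire: the explicit factor $|t-s|\le\delta$ in those couplings, and the fact that the difference together with all the $s$- and $y$-derivatives entering the norms vanishes at $s=0$ (the compatibility relations at $(0,0,y)$, obtained by evaluating the equations and their $t$-derivatives there, are common to both iterates); a standard parabolic interpolation, trading an arbitrarily small amount of Hölder exponent, then upgrades this to a $\delta^{\theta}$ bound ($\theta>0$) for the top-order seminorms as well. Hence the difference of the images is bounded by $C\delta^{\theta}$ times the difference of the inputs in $\lVert\cdot\rVert^{(2r+k+\alpha)}_{[0,\delta]}$, a contraction once $\delta$ is small; its fixed point is the desired $\bm{u}\in\bm{\Omega}^{(2r+k+\alpha)}_{[0,\delta]}$, and uniqueness in that space follows by applying the same difference argument to any two solutions. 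The estimate \eqref{Estimates of solutions of nonlocal linear PDEs} is then read off from the construction — the solution lies in the ball of radius $C\big(\lVert\bm{f}\rVert^{(k+\alpha)}_{[0,\delta]}+\lVert\bm{g}\rVert^{(2r+k+\alpha)}_{[0,\delta]}\big)$ with $C$ depending only on $\lambda$ and the $\bm{\Omega}^{(k+\alpha)}$-norms of the coefficients of $\bm{L}$ — or, equivalently, obtained directly as an a priori bound for any given solution via the same rewriting.

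\emph{Main obstacle.} I expect the genuine difficulty to be the contraction at the level of the highest-order norm: because $\sum_{|I|=2r}\bm{B}^I\partial_I\bm{u}(s,s,y)$ is of the same order as $\bm{u}_s$, no naive iteration contracts, \emph{irrespective} of how small $\delta$ is. The way out is the interplay of the second ellipticity condition \eqref{Uniform ellipticity condition of nonlocal part} (which lets the ``diagonal part'' of this term be absorbed into a non-degenerate principal symbol) with the compatibility-driven vanishing at $s=0$ and parabolic interpolation (which manufacture the missing smallness $\delta^{\theta}$ in the leading seminorms), and this balance closes only because $\bm{\Omega}^{(2r+k+\alpha)}$ was built to carry $\bm{u}_t$, which the diagonal restriction forces upon us.
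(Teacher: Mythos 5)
The paper does not contain a proof of Theorem~\ref{Schauder estimates} at all: the sentence immediately preceding the statement reads ``These claims, proved in \cite{Lei2021a} and recapped below, establishes a foundation for the further analysis of nonlocal fully-nonlinear systems in the next section.'' So Theorem~\ref{Schauder estimates} is a black-box preliminary imported from the companion paper \cite{Lei2021a} and used in Section~\ref{sec:quasilinearization} exactly as such; there is no in-paper proof to compare your argument against.

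Taken on its own terms, your sketch is a coherent route and is consistent in spirit with how the paper is set up --- in particular the split $\partial_I\bm u(s,s,y)=\partial_I\bm u(t,s,y)-\int_s^t\partial_I\bm u_t(\tau,s,y)\,d\tau$ explains both why the second ellipticity condition \eqref{Uniform ellipticity condition of nonlocal part} is needed (to make $\sum(\bm A^I+\bm B^I)$ a legitimate principal part) and why $\lVert\cdot\rVert^{(l)}$ is built to carry $\bm u_t$. But three points need repair before it is a proof. First, your Step~2 claim that after freezing only the $\int_s^t$-couplings the two equations become ``uncoupled classical parabolic systems'' is not accurate: the $t$-differentiated equation for $\bm p=\bm u_t$ still contains the source $\sum\bm A^I_t\,\partial_I\bm u(t,s,y)+\sum\bm B^I_t\,\partial_I\bm u(s,s,y)$, which involves the full $2r$-th order derivatives of $\bm u$ with no factor of $\delta$; these must also be frozen (or the $\bm u$-equation solved first, Gauss--Seidel style) and the resulting two-stage contraction estimate written out explicitly. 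Second, during the iteration the output pair $(\bm U,\bm P)$ is not a priori of the form $(\bm U,\bm U_t)$, so the fixed point actually lives in $\bm\Theta^{(2r+k+\alpha)}_{[0,\delta]}\times\bm\Theta^{(2r+k+\alpha)}_{[0,\delta]}$ and the identity $\bm p=\bm u_t$ must be recovered afterwards by a separate uniqueness argument for the frozen local problems. Third, your appeal to ``parabolic interpolation, trading an arbitrarily small amount of H\"older exponent'' to manufacture $\delta^{\theta}$ in the top seminorms tacitly requires the coefficients and data to lie in a strictly higher H\"older class than asserted in the hypotheses; either state that extra regularity, or replace the interpolation by a genuine use of the vanishing-at-$s=0$ structure you allude to --- the two are not interchangeable and the hypothesis count differs.
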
 

This well-posedness result of nonlocal linear systems are similar to the classical ones in \cite{Ladyzhanskaya1968,Lunardi1995,Solonnikov1965}. It shows not only that the Banach space $\bm{\Omega}^{(l)}_{[0,\delta]}$ is suitable for the problem of our interest in the sense that the mapping of $\bm{L}$ is closed among these defined spaces, but also that the regularities of solutions can be improved accordingly in line with that of $\bm{A}^I$, $\bm{B}^I$, $\bm{f}$, and $\bm{g}$. 


\section{Quasilinearization Methods for Nonlocal Parabolic Systems} \label{sec:quasilinearization} 
This section first present a quasilinearization method that degenerates nonlocal fully-nonlinear systems into quasilinear ones. Then, we adopt fixed-point arguments to prove the existence and uniqueness of nonlocal quasilinear systems \eqref{Nonlocal quasilinear systems}. Finally, inspired by the degeneracy approach, a variant of the quasilinearization method is investigated to study nonlocal fully-nonlinear systems from a different perspective.   


\subsection{Quasilinear Counterparts of Nonlocal Fully-Nonlinear Parabolic Systems} \label{sec:degeneration}
The major difficulty of showing the well-posedness of \eqref{Nonlocal fully-nonlinear systems} is to deal with the highest order terms $\left(\partial_I\bm{u}\right)_{|I|=2r}(t,s,y)$ and $\left(\partial_I\bm{u}\right)_{|I|=2r}(s,s,y)$. Without loss of generality, we consider a simplied nonlinearity $\bm{F}$ of \eqref{Nonlocal fully-nonlinear systems} without the lower order terms $\left(\partial_I\bm{u}\right)_{|I|\leq 2r-1}(t,s,y)$  and $\left(\partial_I\bm{u}\right)_{|I|\leq 2r-1}(s,s,y)$ and let $r=m=1$ such that \eqref{Nonlocal fully-nonlinear systems} is reduced to a nonlocal fully-nonlinear second-order PDE for an unknown real-valued function $u$:
\begin{equation} \label{Simplied nonlocal fully-nonlinear equation}   
	\left\{
	\begin{array}{rcl}
		u_s(t,s,y)&=&F\big(t,s,y,u_{yy}(t,s,y),u_{yy}(s,s,y)\big), ~~~~~~ \\
		u(t,0,y)&=&g(t,y),\hfill 0\leq s\leq t\leq T,\quad y\in\mathbb{R}^d.
	\end{array}
	\right.
\end{equation} 
A similar analysis below can be easily extended to a more general $\bm{F}$ and higher order systems. 

Suppose that the solution $u$ to \eqref{Simplied nonlocal fully-nonlinear equation} is thrice continuously differentiable with respect to $y$. By differentiating both sides of \eqref{Simplied nonlocal fully-nonlinear equation} with respect to $y_k$ for $k=1,\ldots,d$ and denoting $v^{(k)}=\frac{\partial u}{\partial y_k}$, we can obtain a system of PDEs for $\left(u,v^{(1)},v^{(2)},\cdots,v^{(d)}\right)$: 
\begin{equation*}  
	\left\{
	\begin{array}{l}
		u_s(t,s,y)=F\big(t,s,y,u_{yy}(t,s,y),u_{yy}(s,s,y)\big), \\
		v^{(k)}_s(t,s,y)=F_{y_k}\left(u\right)+\sum^d\limits_{i,j=1}F_{q_{ij}}(u)v^{(k)}_{y_iy_j}(t,s,y)+\sum^d\limits_{i,j=1}F_{n_{ij}}(u)v^{(k)}_{y_iy_j}(s,s,y),\\
		\hfill k=1,\ldots,d, \\
		\left(u,v^{(1)},\cdots,v^{(d)}\right)(t,0,y)=(g,g_{y_1},\cdots,g_{y_d})(t,y),\quad 0\leq s\leq t\leq T,\quad y\in\mathbb{R}^d,
	\end{array}
	\right. 
\end{equation*} 
where $F_{y_k}(u)$, $F_{q_{ij}}(u)$, and $F_{n_{ij}}(u)$ represent the first order derivative of $F$ with respect to $y_k$, $u_{y_iy_j}(t,s,y)$, and $u_{y_iy_j}(s,s,y)$, respectively, while they are all evaluated at the point \\ $(t,s,y,u_{yy}(t,s,y),u_{yy}(s,s,y))$. Moreover, it can be rewritten as  
\begin{equation} \label{Induced system of v} 
	\left\{
	\begin{array}{l}
		u_s(t,s,y)=\sum^d\limits_{i,j=1}u_{y_iy_j}(t,s,y)+\sum^d\limits_{i,j=1}u_{y_iy_j}(s,s,y) \\
		\qquad\qquad\quad 
		+F\big(t,s,y,v_{y}(t,s,y),v_{y}(s,s,y)\big)-\sum^d\limits_{i,j=1}v^{(i)}_{y_j}(t,s,y)-\sum^d\limits_{i,j=1}v^{(i)}_{y_j}(s,s,y), \\
		v^{(k)}_s(t,s,y)=F_{y_k}\left(v\right)+\sum^d\limits_{i,j=1}F_{q_{ij}}(v)v^{(k)}_{y_iy_j}(t,s,y)+\sum^d\limits_{i,j=1}F_{n_{ij}}(v)v^{(k)}_{y_iy_j}(s,s,y), \\
		\hfill k=1,\ldots,d, \\
		\left(u,v^{(1)},\cdots,v^{(d)}\right)(t,0,y)=(g,g_{y_1},\cdots,g_{y_d})(t,y),\hfill 0\leq s\leq t\leq T,\quad y\in\mathbb{R}^d.
	\end{array}
	\right. 
\end{equation} 
where $F_{y_k}(v)$, $F_{q_{ij}}(v)$ and $F_{n_{ij}}(v)$ are all evaluated at $(t,s,y,v_y(t,s,y),v_y(s,s,y))$. The induced system \eqref{Induced system of v} admits the form of a nonlocal quasilinear system \eqref{Nonlocal quasilinear systems}.

Next, the following lemma shows the equivalence between \eqref{Simplied nonlocal fully-nonlinear equation} and \eqref{Induced system of v}.  
\begin{lemma} \label{lem:equivalence}
	Suppose that in solving Problems \eqref{Simplied nonlocal fully-nonlinear equation} and \eqref{Induced system of v}, we look for sufficiently regular solutions $u$ and $(u,v^{(1)},v^{(2)},\cdots,v^{(d)})$, respectively, such that $u$ is thrice continuously differentiable with respect to $y$. Then Problems \eqref{Simplied nonlocal fully-nonlinear equation} and \eqref{Induced system of v} are equivalent (locally), i.e.
	\begin{enumerate} 
		\item if $u$ (locally) solves \eqref{Simplied nonlocal fully-nonlinear equation}, then $(u,\frac{\partial u}{\partial y_1},\frac{\partial u}{\partial y_2},\cdots,\frac{\partial u}{\partial y_d})$ (locally) solves \eqref{Induced system of v}; 
		\item if $(u,v^{(1)},v^{(2)},\cdots,v^{(d)})$ (locally) solves \eqref{Induced system of v}, then $u$ (locally) solves \eqref{Simplied nonlocal fully-nonlinear equation} and $v^{(k)}=\frac{\partial u}{\partial y_k}$. 
	\end{enumerate}
\end{lemma}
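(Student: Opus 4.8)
The plan is to establish the two implications separately: implication (1) by direct differentiation and the chain rule, and implication (2) by reducing the claim that $v^{(k)}=\partial u/\partial y_k$ to a uniqueness statement for a homogeneous nonlocal linear system, after which the remaining assertion follows by substitution.

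For implication (1), assume $u$ (locally) solves \eqref{Simplied nonlocal fully-nonlinear equation} and is thrice continuously differentiable in $y$, and put $v^{(k)}:=\partial u/\partial y_k$. The first line of \eqref{Induced system of v} then becomes an identity: since $v^{(i)}_{y_j}=u_{y_iy_j}$ the appended second-order sums $\sum_{i,j}u_{y_iy_j}$ and $\sum_{i,j}v^{(i)}_{y_j}$ cancel at both $(t,s,y)$ and $(s,s,y)$, while $v_y=u_{yy}$ collapses the $F$-term back to the right-hand side of \eqref{Simplied nonlocal fully-nonlinear equation}. The $v^{(k)}$-equations are obtained by differentiating \eqref{Simplied nonlocal fully-nonlinear equation} in $y_k$ — this is exactly the computation displayed just before the lemma — the only point requiring care being that the diagonal argument $u_{yy}(s,s,y)$ is differentiated solely in its explicit spatial slot (the temporal slot $s$ is frozen), which is legitimate because $u$ is thrice continuously differentiable in $y$ and $F$ is $C^1$ in its arguments; since $v_y=u_{yy}$, the coefficients $F_{y_k},F_{q_{ij}},F_{n_{ij}}$ evaluated along $u$ and along $v$ coincide, and $u_{y_iy_jy_k}=v^{(k)}_{y_iy_j}$. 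The initial condition follows from differentiating $u(t,0,y)=g(t,y)$.

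For implication (2), assume $(u,v^{(1)},\dots,v^{(d)})$ (locally) solves \eqref{Induced system of v} with $u$ thrice continuously differentiable in $y$, and set $w^{(k)}:=\partial u/\partial y_k-v^{(k)}$; the two initial conditions give $w^{(k)}(t,0,y)\equiv0$. Everything reduces to showing $w^{(k)}\equiv0$, for once this is known, inserting $v^{(k)}=\partial u/\partial y_k$ into the first line of \eqref{Induced system of v} cancels the appended second-order sums and turns that line into \eqref{Simplied nonlocal fully-nonlinear equation}. To derive an equation for $w$, I would differentiate the first line of \eqref{Induced system of v} in $y_k$ and subtract the $v^{(k)}$-line. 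The $F_{y_k}$-terms cancel, and using the Schwarz symmetry $u_{y_iy_jy_k}=u_{y_ky_iy_j}$ (the essential use of the $C^3$-in-$y$ hypothesis) to rewrite $v^{(i)}_{y_jy_k}-v^{(k)}_{y_iy_j}=w^{(k)}_{y_iy_j}-w^{(i)}_{y_jy_k}$ and $\sum_{i,j}u_{y_iy_jy_k}-\sum_{i,j}v^{(i)}_{y_jy_k}=\sum_{i,j}w^{(i)}_{y_jy_k}$, every term not involving $w$ drops out, leaving a homogeneous nonlocal linear system for $w=(w^{(1)},\dots,w^{(d)})$ of the form \eqref{Nonlocal linear systems}, with coefficients assembled from $F_{q_{ij}}(v)$ and $F_{n_{ij}}(v)$ and with zero initial data. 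One then invokes the uniqueness part of Theorem~\ref{Schauder estimates} to conclude $w\equiv0$ on a triangle $\Delta[0,\delta]\times\mathbb{R}^d$.

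The step I expect to be the main obstacle is precisely this concluding part of implication (2): confirming that the system obtained for $w$ actually satisfies the hypotheses of Theorem~\ref{Schauder estimates}. This requires, first, keeping the index bookkeeping in the cancellation tight enough that no highest-order term in $v$ survives, and second — the more delicate matter — checking that the principal part of the $w$-system inherits the uniform ellipticity \eqref{Uniform ellipticity condition of local part}--\eqref{Uniform ellipticity condition of nonlocal part}; this is where the particular second-order operator appended during quasilinearization, together with the ellipticity of $F$ transmitted through $F_{q_{ij}}$ and through $F_{q_{ij}}+F_{n_{ij}}$, must be exploited. One should also record that $F_{q_{ij}}(v),F_{n_{ij}}(v)$ inherit enough H\"older regularity from $v$ and from the smoothness of $F$ for the theorem to apply, and that the local existence horizons $\delta$ for \eqref{Simplied nonlocal fully-nonlinear equation} and \eqref{Induced system of v} can be chosen to coincide, so that the equivalence holds on one common $\Delta[0,\delta]\times\mathbb{R}^d$.
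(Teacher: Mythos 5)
Your treatment of implication (1) matches the paper's (both regard it as immediate from differentiation), but for implication (2) you take a one-step route where the paper uses a crucial two-step route, and the difference is not cosmetic: the gap you flag at the end is a genuine gap, and it is exactly the one the paper's extra step is designed to close.

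Concretely, your plan is to derive, in one shot, a homogeneous nonlocal linear system for $w^{(k)}:=\partial u/\partial y_k-v^{(k)}$ by differentiating the $u$-equation of \eqref{Induced system of v} in $y_k$ and subtracting the $v^{(k)}$-equation, using Schwarz symmetry only for $u$. Carrying this through, the local principal part of the resulting coupled system for $w=(w^{(1)},\dots,w^{(d)})$ has symbol (at $r=1$)
\[
Q(\xi,v)\;=\;(\xi\cdot v)\sum_{i,j}\bigl(1-F_{q_{ij}}\bigr)\xi_j v^i\;+\;\Bigl(\sum_{i,j}F_{q_{ij}}\xi_i\xi_j\Bigr)\,|v|^2 ,
\]
and the first, cross-coupled term — coming from the interaction between the appended operator $\sum_{i,j}\partial_{y_i y_j}$ and the $F_{q_{ij}}$-part — has no sign. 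The ellipticity of $F$ controls the second summand but gives you nothing on the first, so this system does \emph{not} visibly satisfy the uniform ellipticity conditions \eqref{Uniform ellipticity condition of local part}--\eqref{Uniform ellipticity condition of nonlocal part}, and Theorem~\ref{Schauder estimates} cannot be invoked. You correctly sense this is the main obstacle, but you do not resolve it, and I do not think it can be resolved within your one-step framing.

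The paper's proof avoids this by inserting an intermediate lemma: first prove the Schwarz symmetry \emph{for $v$ itself}, i.e.\ $v^{(k)}_{y_l}=v^{(l)}_{y_k}$, by differentiating the $v^{(k)}$-equation in $y_l$, the $v^{(l)}$-equation in $y_k$, taking the difference, and observing (after using $F_{q_{ij}n_{ij}}=F_{n_{ij}q_{ij}}$) that the antisymmetric quantities satisfy a homogeneous nonlocal linear system whose principal part is block-diagonal with coefficients $F_{q_{ij}}(v)$ and $F_{n_{ij}}(v)$ — these inherit ellipticity directly from $F$, so Theorem~\ref{Schauder estimates} applies and gives $v^{(k)}_{y_l}\equiv v^{(l)}_{y_k}$. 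Only then does one subtract, and with $v^{(i)}_{y_j y_k}=v^{(k)}_{y_i y_j}$ now known, all the $F_{q_{ij}}$- and $F_{n_{ij}}$-coupling cancels, leaving for $\partial u/\partial y_k - v^{(k)}$ the decoupled system with principal part exactly the appended operator $\sum_{i,j}\partial_{y_i y_j}$; this second system is far simpler and is where Theorem~\ref{Schauder estimates} is applied a second time. So the missing idea in your proposal is the separate preliminary proof that $v_y$ is a symmetric matrix, which neutralizes the ill-signed cross term before you ever form $w$. Without it, your $w$-system is not parabolic in the required sense and the argument does not close.
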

\begin{proof}
	The first claim is obvious given that $u$ is thrice continuously differentiable in $y$. We only prove the second claim.  
	To this end, we first show that
	\begin{equation} \label{Exchange condition}
		v^{(k)}_{y_l}=v^{(l)}_{y_k}.
	\end{equation}
	for $k,l=1,\ldots,d$ while $(u,v^{(1)},v^{(2)},\cdots,v^{(d)})$ solves \eqref{Induced system of v}. Differentiating the equation of $v^{(k)}$ of \eqref{Induced system of v} with $y_l$ and differentiating $v^{(l)}$ with ${y_k}$ yield  
	\begin{equation*} 
		\left\{
		\begin{array}{l}
			\big(v^{(k)}_{y_l}\big)_s(t,s,y)=F_{y_ky_l}(v) \\
			\qquad \qquad \qquad ~ +\sum^d\limits_{i,j=1}F_{y_kq_{ij}}(v)v^{(i)}_{y_jy_l}(t,s,y)+\sum^d\limits_{i,j=1}F_{y_kn_{ij}}(v)v^{(i)}_{y_jy_l}(s,s,y) \\
			\qquad \qquad \qquad ~ +\sum^d\limits_{i,j=1}v^{(k)}_{y_iy_j}(t,s,y) \bigg[F_{y_lq_{ij}}(v)+F_{q_{ij}q_{ij}}(v)v^{(i)}_{y_jy_l}(t,s,y) \\
			\qquad \qquad \qquad \qquad \qquad \qquad \qquad \qquad \qquad \qquad \qquad +F_{q_{ij}n_{ij}}(v)v^{(i)}_{y_jy_l}(s,s,y)\bigg]   \\
			\qquad \qquad \qquad ~ +\sum^d\limits_{i,j=1}v^{(k)}_{y_iy_j}(s,s,y) \bigg[F_{y_ln_{ij}}(v)+F_{n_{ij}q_{ij}}(v)v^{(i)}_{y_jy_l}(t,s,y) \\
			\qquad \qquad \qquad \qquad \qquad \qquad \qquad \qquad \qquad \qquad \qquad +F_{n_{ij}n_{ij}}(v)v^{(i)}_{y_jy_l}(s,s,y)\bigg]   \\
			\qquad \qquad \qquad ~ +\sum^d\limits_{i,j=1}F_{q_{ij}}(v)\left(v^{(k)}_{y_l}\right)_{y_iy_j}(t,s,y)+\sum^d\limits_{i,j=1}F_{n_{ij}}(v)\left(v^{(k)}_{y_l}\right)_{y_iy_j}(s,s,y), \\
			\big(v^{(l)}_{y_k}\big)_s(t,s,y)=F_{y_ly_k}(v) \\
			\qquad \qquad \qquad ~ +\sum^d\limits_{i,j=1}F_{y_lq_{ij}}(v)v^{(i)}_{y_jy_k}(t,s,y)+\sum^d\limits_{i,j=1}F_{y_ln_{ij}}(v)v^{(i)}_{y_jy_k}(s,s,y) \\
			\qquad \qquad \qquad ~ +\sum^d\limits_{i,j=1}v^{(l)}_{y_iy_j}(t,s,y)\bigg[F_{y_kq_{ij}}(v)+F_{q_{ij}q_{ij}}(v)v^{(i)}_{y_jy_k}(t,s,y) \\
			\qquad \qquad \qquad \qquad \qquad \qquad \qquad \qquad \qquad \qquad \qquad +F_{q_{ij}n_{ij}}(v)v^{(i)}_{y_jy_k}(s,s,y)\bigg]    \\
			\qquad \qquad \qquad ~ +\sum^d\limits_{i,j=1}v^{(l)}_{y_iy_j}(s,s,y)\bigg[F_{y_kn_{ij}}(v)+F_{n_{ij}q_{ij}}(v)v^{(i)}_{y_jy_k}(t,s,y) \\
			\qquad \qquad \qquad \qquad \qquad \qquad \qquad \qquad \qquad \qquad \qquad +F_{n_{ij}n_{ij}}(v)v^{(i)}_{y_jy_k}(s,s,y)\bigg]    \\
			\qquad \qquad \qquad ~ +\sum^d\limits_{i,j=1}F_{q_{ij}}(v)\left(v^{(l)}_{y_k}\right)_{y_iy_j}(t,s,y)+\sum^d\limits_{i,j=1}F_{n_{ij}}(v)\left(v^{(l)}_{y_k}\right)_{y_iy_j}(s,s,y), \\
			(v^{(k)}_{y_l},v^{(l)}_{y_k})(t,0,y)=(g_{y_ky_l},g_{y_ly_k})(t,y),\hfill 0\leq s\leq t\leq T,~y\in\mathbb{R}^d,~k,l=1,\ldots,d. 
		\end{array}
		\right. 
	\end{equation*} 
	Then, by noting the symmetry between $F_{q_{ij}n_{ij}}$ and $F_{n_{ij}q_{ij}}$, i.e. $F_{q_{ij}n_{ij}}(v)=F_{n_{ij}q_{ij}}(v)$, the difference between $v^{(k)}_{y_l}$ and $v^{(l)}_{y_k}$ satisfies the nonlocal system: 
	\begin{equation} \label{eq:diff2ndv}
		\left\{
		\begin{array}{l}
			\left(v^{(k)}_{y_l}-v^{(l)}_{y_k}\right)_s(t,s,y)=\sum^d\limits_{i,j=1}F_{y_kq_{ij}}(v)\left(v^{(i)}_{y_l}-v^{(l)}_{y_i}\right)_{y_j}(t,s,y) \\
			
			\qquad\qquad\qquad\qquad\qquad\quad +\sum^d\limits_{i,j=1}F_{y_kn_{ij}}(v)\left(v^{(i)}_{y_l}-v^{(l)}_{y_i}\right)_{y_j}(s,s,y) \\
			
			\qquad\qquad\qquad\qquad\qquad\quad +\sum^d\limits_{i,j=1}F_{y_lq_{ij}}(v)\left(v^{(k)}_{y_i}-v^{(i)}_{y_k}\right)_{y_j}(t,s,y) \\
			
			\qquad\qquad\qquad\qquad\qquad\quad +\sum^d\limits_{i,j=1}F_{y_ln_{ij}}(v)\left(v^{(k)}_{y_i}-v^{(i)}_{y_k}\right)_{y_j}(s,s,y) \\
			
			\qquad\qquad\qquad\qquad\qquad\quad +\sum^d\limits_{i,j=1}F_{q_{ij}q_{ij}}(v)v^{(i)}_{y_jy_l}(t,s,y)\left(v^{(k)}_{y_i}-v^{(i)}_{y_k}\right)_{y_j}(t,s,y)  \\
			
			\qquad\qquad\qquad\qquad\qquad\quad +\sum^d\limits_{i,j=1}F_{q_{ij}q_{ij}}(v)v^{(i)}_{y_jy_k}(t,s,y)\left(v^{(i)}_{y_l}-v^{(l)}_{y_i}\right)_{y_j}(t,s,y) \\
			
			\qquad\qquad\qquad\qquad\qquad\quad +\sum^d\limits_{i,j=1}F_{n_{ij}n_{ij}}(v)v^{(i)}_{y_jy_l}(s,s,y)\left(v^{(k)}_{y_i}-v^{(i)}_{y_k}\right)_{y_j}(s,s,y)  \\
			
			\qquad\qquad\qquad\qquad\qquad\quad +\sum^d\limits_{i,j=1}F_{n_{ij}n_{ij}}(v)v^{(i)}_{y_jy_k}(s,s,y)\left(v^{(i)}_{y_l}-v^{(l)}_{y_i}\right)_{y_j}(s,s,y) \\
			
			\qquad\qquad\qquad\qquad\qquad\quad +\sum^d\limits_{i,j=1}F_{q_{ij}n_{ij}}(v)v^{(i)}_{y_jy_l}(s,s,y)\left(v^{(k)}_{y_i}-v^{(i)}_{y_k}\right)_{y_j}(t,s,y)  \\
			
			\qquad\qquad\qquad\qquad\qquad\quad +\sum^d\limits_{i,j=1}F_{q_{ij}n_{ij}}(v)v^{(i)}_{y_jy_k}(t,s,y)\left(v^{(i)}_{y_l}-v^{(l)}_{y_i}\right)_{y_j}(s,s,y) \\
			
			\qquad\qquad\qquad\qquad\qquad\quad +\sum^d\limits_{i,j=1}F_{n_{ij}q_{ij}}(v)v^{(i)}_{y_jy_l}(t,s,y)\left(v^{(k)}_{y_i}-v^{(i)}_{y_k}\right)_{y_j}(s,s,y)  \\
			
			\qquad\qquad\qquad\qquad\qquad\quad +\sum^d\limits_{i,j=1}F_{n_{ij}q_{ij}}(v)v^{(i)}_{y_jy_k}(s,s,y)\left(v^{(i)}_{y_l}-v^{(l)}_{y_i}\right)_{y_j}(t,s,y) \\
			
			\qquad\qquad\qquad\qquad\qquad\quad +\sum^d\limits_{i,j=1}F_{q_{ij}}(v)\left(v^{(k)}_{y_l}-v^{(l)}_{y_k}\right)_{y_iy_j}(t,s,y) \\
			
			\qquad\qquad\qquad\qquad\qquad\quad +\sum^d\limits_{i,j=1}F_{n_{ij}}(v)\left(v^{(k)}_{y_l}-v^{(l)}_{y_k}\right)_{y_iy_j}(s,s,y), \\
			
			\left(v^{(k)}_{y_l}-v^{(l)}_{y_k}\right)(t,0,y)=0,\hfill 0\leq s\leq t\leq T,\quad y\in\mathbb{R}^d, \quad k,l=1,\ldots,d,
		\end{array}
		\right. 
	\end{equation} 
	which is a nonlocal linear system for $v^{(k)}_{y_l}-v^{(l)}_{y_k}$ for any $k,l=1,\ldots,d$. Suppose that both $F$ and $g$ are regular enough and that $(u,v^{(1)},\cdots,v^{(d)})$ is a solution of \eqref{Induced system of v}, we have a trivial solution to \eqref{eq:diff2ndv} and $v^{(k)}_{y_l}=v^{(l)}_{y_k}$ by the uniqueness of the solution to nonlocal linear systems in Theorem \ref{Schauder estimates}. Hence, the condition \eqref{Exchange condition} holds.
	
	We are now ready to show the second claim. According to \eqref{Induced system of v}, we have 
	\begin{equation*} 
		\left\{
		\begin{array}{l}
			\left(\frac{\partial u}{\partial y_k}\right)_s(t,s,y)=\sum^d\limits_{i,j=1}\left(\frac{\partial u}{\partial y_k}\right)_{y_iy_j}(t,s,y)+\sum^d\limits_{i,j=1}\left(\frac{\partial u}{\partial y_k}\right)_{y_iy_j}(s,s,y) \\
			\qquad \qquad \qquad \qquad +F_{y_k}(v)+\sum^d\limits_{i,j=1}F_{q_{ij}}(v)v^{(i)}_{y_jy_k}(t,s,y)+\sum^d\limits_{i,j=1}F_{n_{ij}}(v)v^{(i)}_{y_jy_k}(s,s,y) \\
			\qquad \qquad \qquad \qquad -\sum^d\limits_{i,j=1}v^{(i)}_{y_jy_k}(t,s,y)-\sum^d\limits_{i,j=1}v^{(i)}_{y_jy_k}(s,s,y), \\
			\left(\frac{\partial u}{\partial y_k}\right)(t,0,y)=g_{y_k}(t,y),\hfill 0\leq s\leq t\leq T,\quad y\in\mathbb{R}^d, \quad k=1,\ldots,d. 
		\end{array}
		\right. 
	\end{equation*}
	With the condition \eqref{Exchange condition}, the difference between $\frac{\partial u}{\partial y_k}$ and $v^{(k)}$ satisfies 
	\begin{equation*} 
		\left\{
		\begin{array}{l}
			\left(\frac{\partial u}{\partial y_k}-v^{(k)}\right)_s(t,s,y)=\sum^d\limits_{i,j=1}\left(\frac{\partial u}{\partial y_k}-v^{(k)}\right)_{y_iy_j}(t,s,y) \\
			\qquad \qquad \qquad \qquad \qquad \qquad \qquad +\sum^d\limits_{i,j=1}\left(\frac{\partial u}{\partial y_k}-v^{(k)}\right)_{y_iy_j}(s,s,y), \\
			\left(\frac{\partial u}{\partial y_k}-v^{(k)}\right)(t,0,y)=0, \qquad 0\leq s\leq t\leq T,\quad y\in\mathbb{R}^d, \quad k=1,\ldots,d,
		\end{array}
		\right. 
	\end{equation*} 
	which implies that $\frac{\partial u}{\partial y_k}=v^{(k)}$, again by the uniqueness of and Schauder's estimate of solutions to nonlocal linear systems in Theorem \ref{Schauder estimates}. Consequently, from the first equation of \eqref{Induced system of v}, we find that $u$ solves \eqref{Simplied nonlocal fully-nonlinear equation}. The result follows.
\end{proof}

The equivalence results in Lemma \ref{lem:equivalence} are extendable for a more general cases. Specifically, the quasilinearization method can degenerate a nonlocal fully-nonlinear system \eqref{Nonlocal fully-nonlinear systems} to a nonlocal quasilinear system \eqref{Nonlocal quasilinear systems}. Intuitively, the extension requires that the solutions of \eqref{Nonlocal fully-nonlinear systems} and \eqref{Nonlocal quasilinear systems} are $(2r+1)$-continuously differentiable with respect to $y$.

We remark here that since the proof of Lemma \ref{lem:equivalence} leveraged on the well-posedness results of nonlocal linear systems in \cite{Lei2021a}, which is obtained for maximally defined solutions over an interval $[0,\tau]$ for the largest possible $\tau$ but possibly less than $T$, the claims of Lemma \ref{lem:equivalence} will also be limited to such an interval and thus the equivalence is local in this sense. Though it is beyond the scope of this paper, we do believe that the well-posedness of nonlocal linear systems can be extended from maximally defined solution to that at large. Our subsequent analyses will also be concerned about the maximally defined solutions.


\subsection{Well-posedness of Nonlocal Quasilinear Systems}
In this subsection, we will prove the existence and uniqueness of nonlocal quasilinear systems \eqref{Nonlocal quasilinear systems} by fixed-point arguments. Given the well-posedness of nonlocal linear systems \eqref{Nonlocal linear systems}, we can establish a contraction by linearizing the quasilinear systems. Furthermore, with the regularity assumptions on coefficients, inhomogeneous term, and initial data improving, the solutions of nonlocal quasilinear systems will be equipped with higher level of regularity in the sense that they are $(2r+1)$-continuously differentiability with respect to $y$, which is exactly required by the quasilinearization method. 

To make use of the results of nonlocal linear systems in Section \ref{sec:preliminaries}, we require certain regularity assumptions on $\bm{A}^I$, $\bm{B}^I$, $\bm{F}$ and $\bm{g}$ of \eqref{Nonlocal quasilinear systems}. We denote a generic $\bm{\mathcal{H}}$, which could be $\bm{A}^I$, $\bm{B}^I$, or $\bm{F}$, and denote by $\bm{\mathcal{H}}_{\bm{\mathcal{X}}}$ and $\bm{\mathcal{H}}_{\bm{\mathcal{XY}}}$ its first- and second-order derivatives, respectively, with respect to $t$ or the variables $\partial_I\bm{u}(t,s,y)$, $\partial_I\bm{u}(s,s,y)$ indicated in Tables \ref{tab:table1} and \ref{tab:table2}. Then the vector/matrix-valued mapping $(t,s,y,z)\mapsto \bm{\mathcal{H}}(t,s,y,z)$ is defined in $\Pi=\Delta[0,T]\times\mathbb{R}^d\times B(\overline{z},R_0)$ for some positive constant $R_0$, where $\overline{z}\in\mathbb{R}^m\times\mathbb{R}^{md}\times\cdots\times\mathbb{R}^{md^{2r-1}}\times\mathbb{R}^m\times\mathbb{R}^{md}\times\cdots\times\mathbb{R}^{md^{2r-1}}$.  
\begin{table}[!ht] 
	\centering
	\begin{tabular}{c| c c c c c c c c c}
		\hline
		$\mathcal{X}$ & $t$ & $s$ & $y$ & $(\partial_I\bm{u})_{|I|\leq 2r-1}(t,s,y)$ & $(\partial_I\bm{u})_{|I|\leq 2r-1}(s,s,y)$ \\ 
		\hline 
		$\bm{\mathcal{H}}_\mathcal{X}$ & $\surd$ & & & $\surd$ & $\surd$ \\ 
		\hline 
	\end{tabular}
	\caption{First-order derivatives of $\bm{\mathcal{H}}$ required to be H\"{o}lder and Lipschitz continuous}
	\label{tab:table1}
\end{table} 

\begin{table}[!ht] 
	\centering
	\begin{tabular}{c| c c c c c c c c c}
		\hline
		\diagbox{$\mathcal{X}$}{$\bm{\mathcal{H}}_{\mathcal{X}\mathcal{Y}}$}{$\mathcal{Y}$} & $t$ & $s$ & $y$ & $(\partial_I\bm{u})_{|I|\leq 2r-1}(t,s,y)$ & $(\partial_I\bm{u})_{|I|\leq 2r-1}(s,s,y)$ \\ 
		\hline 
		$t$ &  &  &  & $\surd$ & $\surd$ \\ 
		$s$ &  &  &  &  & \\
		$y$ &  &  &  &  & \\
		$(\partial_I\bm{u})_{|I|\leq 2r-1}(t,s,y)$ & $\surd$ &  &  & $\surd$ & $\surd$ \\
		$(\partial_I\bm{u})_{|I|\leq 2r-1}(s,s,y)$ & $\surd$ &  &  & $\surd$ & \\
		\hline 
	\end{tabular}
	\caption{Second-order derivatives of $\bm{\mathcal{H}}$ required to be H\"{o}lder and Lipschitz continuous}
	\label{tab:table2}
\end{table}

\begin{assumption} \label{assumption}
	By introducing $\mathbb{H}\in\left\{\bm{\mathcal{H}},\bm{\mathcal{H}}_{\bm{\mathcal{X}}},\bm{\mathcal{H}}_{\bm{\mathcal{XY}}}\right\}$, it is required that $\mathbb{H}$ is continuous with respect to its all arguments, locally Lipschitz continuous with respect to $z$, and locally $C^{\frac{\alpha}{2},\alpha}$-H\"{o}lder continuous with respect to $(s,y)$ uniformly with respect to the other variables. Specifically, together with the uniform ellipticity condition, $\mathbb{H}$ needs to satisfy the following conditions:
	\begin{enumerate}[label=(\roman*)]
		\item \textbf{(Ellipticity condition)} for any $\xi=(\xi_1,\ldots,\xi_d)\in\mathbb{R}^d$ and $v=(v^1,\ldots.v^m)\in\mathbb{R}^m$, there exists $\lambda>0$ such that
		\begin{eqnarray*} \label{Ellipticity conditions of AB}
			&& (-1)^{r-1}\sum_{|I|=2r}\left[\bm{A}^I(t,s,y,z)v,v\right]_{\mathbb{R}^m}\xi_{i_1}\cdots\xi_{i_{2r}} > \lambda|\xi|^{2r}|v|^2, \\
			&&
			(-1)^{r-1}\sum_{|I|=2r}\left[\left(\bm{A}^I+\bm{B}^I\right)(t,s,y,z)v,v\right]_{\mathbb{R}^m}\xi_{i_1}\cdots\xi_{i_{2r}} > \lambda|\xi|^{2r}|v|^2
		\end{eqnarray*}
		hold uniformly with respect to $(t,s,y,z)\in\Pi$, where $[\cdot,\cdot]_{\mathbb{R}^m}$ denotes the standard scalar product in $\mathbb{R}^m$;
		
		\item \textbf{(H\"{o}lder continuity)} for every $\delta\geq 0$ and $z\in B(\overline{z},R_0)$, there exists $K>0$ such that
		\begin{equation*} \label{Holder continuity of F}
			\sup_{(t,z)}\left\{\left| \mathbb{H}(t,\cdot,\cdot,z)\right|^{(\alpha)}_{[0,t\land\delta]\times\mathbb{R}^d}\right\}=K;
		\end{equation*}
		
		\item \textbf{(Lipschitz continuity)} for any $(t,s,y,z_1),(t,s,y,z_2)\in\Pi$, there exists $L>0$ such that
		\begin{equation*} \label{Lipschitz continuity of F} 
			|\mathbb{H}(t,s,y,z_1)-\mathbb{H}(t,s,y,z_2)|\leq L|z_1-z_2|. 
		\end{equation*}
	\end{enumerate} 
\end{assumption}

Under Assumption \ref{assumption}, we show the well-posedness of nonlocal quasilinear system \eqref{Nonlocal quasilinear systems}. 

\begin{theorem} \label{Well-posedness of quasilinear systems} 
	Suppose that Assumption \ref{assumption} holds, $\bm{g}\in\bm{\Omega}^{{(2r+\alpha)}}_{[0,T]}$, and  that \\ $\big(\partial_I\bm{g}_{|I|\leq 2r-1}(t,y),(\partial_I\bm{g})_{|I|\leq 2r-1}(s,y)\big)$ is ranged within a ball centered at $\overline{z}$ with radius $R_0/2$. Then there exists $\delta\in[0,T]$ such that the nonlocal quasilinear system \eqref{Nonlocal quasilinear systems} admits a unique solution $\bm{u}\in\bm{\Omega}^{(2r+\alpha)}_{[0,\delta]}$ in $\Delta[0,\delta]\times\mathbb{R}^d$.  
\end{theorem}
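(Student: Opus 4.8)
The plan is to recast \eqref{Nonlocal quasilinear systems} as a fixed-point equation $\bm{u}=\Phi(\bm{u})$ for a linearization map $\Phi$ built from Theorem~\ref{Schauder estimates} (used with $k=0$), and then to apply the contraction mapping principle in the Banach space $\bm{\Omega}^{(2r+\alpha)}_{[0,\delta]}$ on a short horizon $\delta$ that is fixed last. For a candidate $\bm{v}$ in the closed ball
\[
\mathcal{B}_{M,\delta}:=\Bigl\{\bm{v}\in\bm{\Omega}^{(2r+\alpha)}_{[0,\delta]}:\ \bm{v}(t,0,y)=\bm{g}(t,y),\ \lVert\bm{v}-\bm{g}\rVert^{(2r+\alpha)}_{[0,\delta]}\leq M\Bigr\}
\]
(here $\bm{g}$ also denotes its $s$-independent extension $\bm{g}(t,s,y):=\bm{g}(t,y)\in\bm{\Omega}^{(2r+\alpha)}_{[0,T]}$), I substitute $(\partial_J\bm{v})_{|J|\leq 2r-1}(t,s,y)$ and $(\partial_J\bm{v})_{|J|\leq 2r-1}(s,s,y)$ into the coefficient slots of $\bm{A}^I,\bm{B}^I,\bm{F}$. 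This turns \eqref{Nonlocal quasilinear systems} into a nonlocal \emph{linear} system of the form \eqref{Nonlocal linear systems} with coefficients $\bm{A}^I_{\bm{v}},\bm{B}^I_{\bm{v}}$, inhomogeneous term $\bm{f}_{\bm{v}}$ (the full $\bm{F}$ with $\bm{v}$ frozen in, which now depends only on $(t,s,y)$), and initial data $\bm{g}$; the unique solution provided by Theorem~\ref{Schauder estimates} is defined to be $\Phi(\bm{v})$. The argument has three parts: (i) $\Phi$ is well defined on $\mathcal{B}_{M,\delta}$ with a uniform existence interval and a uniform Schauder constant; (ii) $\Phi$ maps $\mathcal{B}_{M_0,\delta}$ into itself for a suitable $M_0$; (iii) $\Phi$ is a contraction there for $\delta$ small.

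For (i) the key tool is a composition estimate: if $\bm{\mathcal{H}}\in\{\bm{A}^I,\bm{B}^I,\bm{F}\}$ satisfies Assumption~\ref{assumption} and $\bm{v}\in\bm{\Omega}^{(2r+\alpha)}_{[0,\delta]}$ has its lower-order derivatives $(\partial_J\bm{v})_{|J|\leq 2r-1}$ valued in $B(\overline{z},R_0)$, then $\bm{\mathcal{H}}(\cdot,(\partial_J\bm{v})(t,s,y),(\partial_J\bm{v})(s,s,y))\in\bm{\Omega}^{(\alpha)}_{[0,\delta]}$ with norm bounded in terms of $\lVert\bm{v}\rVert^{(2r+\alpha)}_{[0,\delta]}$ and the constants $K,L$ of Assumption~\ref{assumption}. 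The derivatives of $\bm{\mathcal{H}}$ required to be H\"older and Lipschitz continuous in Tables~\ref{tab:table1} and~\ref{tab:table2} are precisely those that arise when differentiating this composition once in $t$ and estimating its $C^{\alpha/2r,\alpha}$-seminorms in $(s,y)$ by the chain rule; note that, for $|J|\leq 2r-1$, the quantities $\partial_J\bm{v}$ and $\partial_t\partial_J\bm{v}$ are controlled by $\lVert\bm{v}\rVert^{(2r+\alpha)}$ whereas $\partial_s\partial_J\bm{v}$ (for $|J|\geq 1$) is not, which is exactly why no $s$-derivative of $\bm{\mathcal{H}}$ occurs in the tables. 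Together with the uniform ellipticity of Assumption~\ref{assumption}(i) — which hands $\bm{A}^I_{\bm{v}},\bm{B}^I_{\bm{v}}$ the conditions \eqref{Uniform ellipticity condition of local part}--\eqref{Uniform ellipticity condition of nonlocal part} — Theorem~\ref{Schauder estimates} applies to each linearized problem.

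For (ii) and (iii) the decisive observation is that every $\bm{v}\in\mathcal{B}_{M,\delta}$ agrees with $\bm{g}$ at $s=0$, so $\bm{v}-\bm{g}$ and (since $\bm{v}_t(t,0,y)=\bm{g}_t(t,y)$) $\bm{v}_t-\bm{g}_t$ vanish at $s=0$; integrating in $s$ and using that $\partial_s\bm{v}$ and $\partial_s\bm{v}_t$ are controlled by $\lVert\bm{v}\rVert^{(2r+\alpha)}$ gives $C^0$-closeness of order $(M+\lVert\bm{g}\rVert^{(2r+\alpha)})\,\delta$, and interpolating this against the bounded $C^{2r+\alpha}$-norm yields, for every $|J|\leq 2r-1$,
\[
|\partial_J(\bm{v}-\bm{g})|^{(\alpha)}_{[0,t]\times\mathbb{R}^d}+|\partial_J(\bm{v}_t-\bm{g}_t)|^{(\alpha)}_{[0,t]\times\mathbb{R}^d}\leq C(M)\,\delta^{\gamma},\qquad \gamma=\tfrac{1}{2r+\alpha}.
\]
In particular, as $(\partial_J\bm{g})_{|J|\leq 2r-1}$ is valued in $B(\overline{z},R_0/2)$, the derivatives $(\partial_J\bm{v})_{|J|\leq 2r-1}$ stay in $B(\overline{z},R_0)$ for $\delta$ small, so $\Phi$ is genuinely defined on the ball; moreover $\bm{A}^I_{\bm{v}},\bm{B}^I_{\bm{v}},\bm{f}_{\bm{v}}$ lie within $C^{\alpha/2r,\alpha}$-distance $1$ of the coefficients obtained by freezing $\bm{g}$ in place of $\bm{v}$, so their norms, the Schauder constant $C$ of Theorem~\ref{Schauder estimates}, and $\lVert\bm{f}_{\bm{v}}\rVert^{(\alpha)}_{[0,\delta]}$ are all bounded by constants $C^{\ast},F^{\ast}$ depending only on $\bm{g}$ and the data, uniformly over $\mathcal{B}_{M,\delta}$ and with a uniform existence interval. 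Writing $\bm{w}:=\Phi(\bm{v})-\bm{g}$, which solves $\bm{w}_s=\bm{L}^{\bm{v}}\bm{w}+(\bm{L}^{\bm{v}}\bm{g}+\bm{f}_{\bm{v}})$ with $\bm{w}(t,0,y)=0$, estimate \eqref{Estimates of solutions of nonlocal linear PDEs} gives $\lVert\bm{w}\rVert^{(2r+\alpha)}_{[0,\delta]}\leq C^{\ast}\bigl(\lVert\bm{L}^{\bm{v}}\bm{g}\rVert^{(\alpha)}_{[0,\delta]}+F^{\ast}\bigr)=:M_0$, a constant independent of $M$ and $\delta$; taking $M:=M_0$ closes (ii).

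For (iii), given $\bm{v}_1,\bm{v}_2\in\mathcal{B}_{M_0,\delta}$, the difference $\Phi(\bm{v}_1)-\Phi(\bm{v}_2)$ solves a nonlocal linear system with leading operator $\bm{L}^{\bm{v}_1}$, zero initial data, and right-hand side $(\bm{L}^{\bm{v}_1}-\bm{L}^{\bm{v}_2})\Phi(\bm{v}_2)+(\bm{f}_{\bm{v}_1}-\bm{f}_{\bm{v}_2})$; by \eqref{Estimates of solutions of nonlocal linear PDEs}, the Lipschitz continuity in Assumption~\ref{assumption}(iii) (for $\bm{\mathcal{H}}$ and for the derivatives flagged in Tables~\ref{tab:table1}--\ref{tab:table2}), and the uniform bound $\lVert\Phi(\bm{v}_2)\rVert^{(2r+\alpha)}_{[0,\delta]}\leq M_0+\lVert\bm{g}\rVert^{(2r+\alpha)}$, this right-hand side is controlled by a fixed constant times the $C^{\alpha/2r,\alpha}$-type norms of $\partial_J(\bm{v}_1-\bm{v}_2)$ and $\partial_J(\bm{v}_{1,t}-\bm{v}_{2,t})$, $|J|\leq 2r-1$; since $\bm{v}_1-\bm{v}_2$ and $\bm{v}_{1,t}-\bm{v}_{2,t}$ again vanish at $s=0$, the interpolation above bounds those by $C\,\delta^{\gamma}\lVert\bm{v}_1-\bm{v}_2\rVert^{(2r+\alpha)}_{[0,\delta]}$, whence
\[
\lVert\Phi(\bm{v}_1)-\Phi(\bm{v}_2)\rVert^{(2r+\alpha)}_{[0,\delta]}\leq C^{\ast}C\,\delta^{\gamma}\lVert\bm{v}_1-\bm{v}_2\rVert^{(2r+\alpha)}_{[0,\delta]}.
\]
Shrinking $\delta$ so that $C^{\ast}C\,\delta^{\gamma}<1$ makes $\Phi$ a contraction on the closed set $\mathcal{B}_{M_0,\delta}$; its unique fixed point is the desired solution $\bm{u}\in\bm{\Omega}^{(2r+\alpha)}_{[0,\delta]}$, and uniqueness in all of $\bm{\Omega}^{(2r+\alpha)}_{[0,\delta]}$ follows since any solution lies in $\mathcal{B}_{M_0,\delta'}$ for a possibly smaller $\delta'$ by the same a priori bound. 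The step I expect to be the main obstacle is this composition-and-interpolation bookkeeping underlying (i)--(iii): verifying that the precise list of H\"older/Lipschitz derivatives in Tables~\ref{tab:table1}--\ref{tab:table2} is simultaneously enough to keep the substituted coefficients in $\bm{\Omega}^{(\alpha)}_{[0,\delta]}$, to make them uniformly $C^{\alpha/2r,\alpha}$-close to the $\bm{g}$-frozen coefficients, and to extract the genuine contraction factor $\delta^{\gamma}$ from the shared initial data — all uniformly over the ball.
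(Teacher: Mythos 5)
Your proposal is correct and follows essentially the same route as the paper: freeze the lower-order derivatives of a candidate $\bm{v}$ in the coefficient slots to produce a nonlocal linear system of the form \eqref{Nonlocal linear systems}, invoke the well-posedness and Schauder estimate of Theorem~\ref{Schauder estimates} to define a solution map on a closed ball around $\bm{g}$ in $\bm{\Omega}^{(2r+\alpha)}_{[0,\delta]}$, and extract a positive power of $\delta$ from the fact that $\bm{v}-\bm{g}$ and $\bm{v}_t-\bm{g}_t$ vanish at $s=0$ to get both the range condition $(\partial_J\bm{v})\in B(\overline{z},R_0)$ and the contraction. The only cosmetic differences are that the paper's self-mapping step runs through the triangle inequality $\lVert\bm{\Gamma}(\bm{u})-\bm{g}\rVert\le\lVert\bm{\Gamma}(\bm{u})-\bm{\Gamma}(\bm{g})\rVert+\lVert\bm{\Gamma}(\bm{g})-\bm{g}\rVert$ together with the already-established contraction, whereas you bound $\lVert\Phi(\bm{v})-\bm{g}\rVert$ directly by a Schauder estimate for $\bm{w}=\Phi(\bm{v})-\bm{g}$; and the paper records the smallness factor as $\delta^{\alpha/(2r)}$ (from the temporal H\"older seminorm of the top-order derivative) while your interpolation gives $\delta^{1/(2r+\alpha)}$ — both are valid positive exponents and play the same role.
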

\begin{proof}
	We use fixed-point arguments to show the well-posedness of nonlocal quasilinear systems \eqref{Nonlocal quasilinear systems}. First, we construct a mapping $\bm{\Gamma}$ from $\bm{u}$ to $\bm{U}$, defined in a closed ball 
	\begin{equation*}
		\bm{\mathcal{U}}=\left\{\bm{u}\in\bm{\Omega}^{(2r+\alpha)}_{[0,\delta]}:\bm{u}(t,0,y)=\bm{g}(t,y),\lVert \bm{u}-\bm{g}\rVert^{(2r+\alpha)}_{[0,\delta]}\leq R\right\}, 
	\end{equation*}
	where $\bm{U}$ is the solution of the following nonlocal linear system: 
	\begin{equation} \label{Mapping of Nonlocal quasilinear systems}  
		\left\{
		\begin{array}{l}
			\bm{U}_s(t,s,y)=\sum\limits_{|I|= 2r}\bm{A}^{I}\Big(t,s,y,\left(\partial_J\bm{u}\right)_{|J|\leq 2r-1}(t,s,y), \\
			\qquad \qquad \qquad \qquad \qquad \qquad \quad \left(\partial_J\bm{u}\right)_{|J|\leq 2r-1}(s,s,y)\Big)\partial_I\bm{U}(t,s,y) \\
			\qquad\qquad\qquad
			+\sum\limits_{|I|= 2r}\bm{B}^{I}\big(t,s,y,\left(\partial_J\bm{u}\right)_{|J|\leq 2r-1}(t,s,y), \\
			\qquad \qquad \qquad \qquad \qquad \qquad \qquad \quad \left(\partial_J\bm{u}\right)_{|J|\leq 2r-1}(s,s,y)\big)\partial_I\bm{U}(s,s,y) \\
			\qquad\qquad\qquad   
			+\bm{F}\big(t,s,y,\left(\partial_J\bm{u}\right)_{|J|\leq 2r-1}(t,s,y),\left(\partial_J\bm{u}\right)_{|J|\leq 2r-1}(s,s,y)\big), \\
			\bm{U}(t,0,y)=\bm{g}(t,y),\hfill 0\leq s\leq t\leq T,\quad y\in\mathbb{R}^d. 
		\end{array}
		\right. 
	\end{equation}
	To validate the term $\mathbb{H}\big(t,s,y,\left(\partial_I\bm{u}\right)_{|I|\leq 2r-1}(t,s,y),  \left(\partial_I\bm{u}\right)_{|I|\leq 2r-1}(s,s,y)\big)$, it requires that the range of various derivatives of $\bm{u}$ in $\bm{\mathcal{U}}$ is contained in $B(\overline{z},R_0)$. Specifically, since
	\begin{equation*} \label{Range of u}
		\sup\limits_{\Delta[0,\delta]\times\mathbb{R}^d}\bigg\{\sum_{|I|\leq 2r}\left(\left|\partial_I\bm{u}(t,s,y)-\partial_I\bm{g}(t,y)\right|+\left|\partial_I\bm{u}(s,s,y)-\partial_I\bm{g}(s,y)\right|\right)\bigg\}\leq C\delta^\frac{\alpha}{2r}R, 
	\end{equation*}
	we require the choices of $\delta$ and $R$ allowing that $C\delta^\frac{\alpha}{2r}R\leq R_0/2$. With such choices, the mapping $\bm{\Gamma}(\bm{u})$ defined by \eqref{Mapping of Nonlocal quasilinear systems} is well-defined given the well-posedness of \eqref{Nonlocal linear systems}.
	
	Second, we shall show that $\bm{\Gamma}(\bm{u})$ is a $\frac{1}{2}$-contraction. For any $\bm{u}$, $\bm{\widehat{\bm{u}}}\in\bm{\mathcal{U}}$, we denote $\bm{U}=\bm{\Gamma}(\bm{u})$ and $\widehat{\bm{U}}=\bm{\Gamma}(\widehat{\bm{u}})$. Then let us consider the nonlocal system satisfied by the difference between $\bm{U}$ and $\widehat{\bm{U}}$, $\Delta\bm{U}:=\bm{U}-\widehat{\bm{U}}$,    
	\begin{equation} \label{Difference between U and U hat}  
		\left\{
		\begin{array}{l}
			(\Delta\bm{U})_s(t,s,y)=\sum\limits_{|I|= 2r}\bm{A}^{I}\big(\bm{u}\big)\partial_I\big(\Delta\bm{U}\big)(t,s,y)+\sum\limits_{|I|= 2r}\bm{B}^{I}\big(\bm{u}\big)\partial_I\big(\Delta\bm{U}\big)(s,s,y) \\
			\qquad \qquad \qquad  +\sum\limits_{|I|=2r}\partial_{I}\widehat{\bm{U}}(t,s,y)\bigg(\sum\limits_{|J|\leq 2r-1}\int^1_0\partial_J\bm{A}^I(\bm{\theta}_\sigma)\big(\partial_J\bm{u}-\partial_J\widehat{\bm{u}}\big)(t,s,y)d\sigma \\
			
			\qquad\qquad\qquad\qquad\qquad\qquad\qquad
			+\sum\limits_{|J|\leq 2r-1}\int_0^1\partial_J\overline{\bm{A}}^I(\bm{\theta}_\sigma)\big(\partial_J\bm{u}-\partial_J\widehat{\bm{u}}\big)(s,s,y)d\sigma\bigg) \\
			
			\qquad \qquad \qquad +\sum\limits_{|I|=2r}\partial_{I}\widehat{\bm{U}}(s,s,y)\bigg(\sum\limits_{|J|\leq 2r-1}\int_0^1\partial_J\bm{B}^I(\bm{\theta}_\sigma)\big(\partial_J\bm{u}-\partial_J\widehat{\bm{u}}\big)(t,s,y)d\sigma \\
			
			\qquad\qquad\qquad\qquad\qquad\qquad\qquad
			+\sum\limits_{|J|\leq 2r-1}\int_0^1\partial_J\overline{\bm{B}}^I(\bm{\theta}_\sigma)\big(\partial_J\bm{u}-\partial_J\widehat{\bm{u}}\big)(s,s,y)d\sigma\bigg) \\
			
			\qquad\qquad \qquad +\sum\limits_{|J|\leq 2r-1}\int_0^1\partial_J\bm{F}(\bm{\theta}_\sigma)\big(\partial_J\bm{u}-\partial_J\widehat{\bm{u}}\big)(t,s,y)d\sigma \\
			
			\qquad\qquad \qquad +\sum\limits_{|J|\leq 2r-1}\int_0^1\partial_J\overline{\bm{F}}(\bm{\theta}_\sigma)\big(\partial_J\bm{u}-\partial_J\widehat{\bm{u}}\big)(s,s,y)d\sigma, \\
			
			\big(\bm{U}-\widehat{\bm{U}}\big)(t,0,y)=\bm{0},\hfill 0\leq s\leq t\leq T,\quad y\in\mathbb{R}^d,
		\end{array}
		\right. 
	\end{equation} 
	where $\partial_J\bm{A}^I$, $\partial_J\bm{B}^I$, and $\partial_J\bm{F}$ (resp. $\partial_J\overline{\bm{A}}^I$, $\partial_J\overline{\bm{B}}^I$, and $\partial_J\overline{\bm{F}}$) are the first order derivatives of $\bm{A}^I$, $\bm{B}^I$, and $\bm{F}$, respectively, with respect to the variable $\partial_J\bm{u}(t,s,y)$ (resp. $\partial_J\bm{u}(s,s,y)$), while they are all evaluated at $(t,s,y,\bm{\theta}_\sigma)$ with 
	\begin{equation*}
		\begin{split}
			\bm{\theta}_\sigma(t,s,y)&=\sigma\big(\left(\partial_I\bm{u}\right)_{|I|\leq 2r-1}(t,s,y),  \left(\partial_I\bm{u}\right)_{|I|\leq 2r-1}(s,s,y)\big) \\
			&\qquad\qquad\qquad  
			+(1-\sigma)\big(\left(\partial_I\widehat{\bm{u}}\right)_{|I|\leq 2r-1}(t,s,y),  \left(\partial_I\widehat{\bm{u}}\right)_{|I|\leq 2r-1}(s,s,y)\big). 
		\end{split}
	\end{equation*}
	According to the Schauder estimate \eqref{Estimates of solutions of nonlocal linear PDEs} of nonlocal linear systems \eqref{Nonlocal linear systems} in Theorem \ref{Schauder estimates}, the definition of $\bm{\Gamma}(\cdot)$ \eqref{Mapping of Nonlocal quasilinear systems} tells us that
	\begin{equation*}
		\lVert\bm{U}\rVert^{(2r+\alpha)}_{[0,\delta]}=\lVert\bm{\Gamma}(\bm{u})\rVert^{(2r+\alpha)}_{[0,\delta]}\leq C(R), \quad  \lVert\widehat{\bm{U}}\rVert^{(2r+\alpha)}_{[0,\delta]}=\lVert\bm{\Gamma}(\widehat{\bm{u}})\rVert^{(2r+\alpha)}_{[0,\delta]}\leq C(R).
	\end{equation*}
	Again by applying the Schauder estimate \eqref{Estimates of solutions of nonlocal linear PDEs} in Theorem \ref{Schauder estimates} to \eqref{Difference between U and U hat}, we have 
	\begin{equation*} \label{Contraction}
		\lVert\bm{\Gamma}(\bm{u})-\bm{\Gamma}(\widehat{\bm{u}})\rVert^{(2r+\alpha)}_{[0,\delta]}=\lVert\bm{U}-\widehat{\bm{U}}\rVert^{(2r+\alpha)}_{[0,\delta]}\leq C(R)\delta^\frac{\alpha}{2r}\lVert\bm{u}-\widehat{\bm{u}}\rVert^{(2r+\alpha)}_{[0,\delta]}.
	\end{equation*} 
	Our desire to have a $\frac{1}{2}$-contraction for $\bm{\Gamma}$ requires that the choices of $\delta$ and $R$ should satisfy $C(R)\delta^\frac{\alpha}{2r}\leq 1/2$.
	
	Third, a suitably large $R$ needs to be chosen such that $\bm{\Gamma}=\bm{\Gamma}(\bm{u})$ maps the closed ball $\bm{\mathcal{U}}$ into itself. For any $\bm{u}\in\bm{\mathcal{U}}$, we have 
	\begin{equation*}
		\lVert\bm{\Gamma}(\bm{u})-\bm{g}\rVert^{(2r+\alpha)}_{[0,\delta]}\leq \lVert\bm{\Gamma}(\bm{u})-\bm{\Gamma}(\bm{g})\rVert^{(2r+\alpha)}_{[0,\delta]}+\lVert \bm{\Gamma}(\bm{g})-\bm{g}\rVert^{(2r+\alpha)}_{[0,\delta]}\leq \frac{R}{2}+\lVert \bm{\Gamma}(\bm{g})-\bm{g}\rVert^{(2r+\alpha)}_{[0,\delta]}. 
	\end{equation*} 
	Hence, to ensure $\lVert\bm{\Gamma}(\bm{u})-\bm{g}\rVert^{(2r+\alpha)}_{[0,\delta]}\leq R$, we require the choices of $\delta$ and $R$ allowing that $\lVert \bm{\Gamma}(\bm{g})-\bm{g}\rVert^{(2r+\alpha)}_{[0,\delta]}\leq R/2$.
	
	Finally, by striking the balance between large enough $R$ and small enough $\delta$ according to the aforementioned three conditions, $\bm{\Gamma}$ is a contraction mapping $\bm{\mathcal{U}}$ into itself. Therefore, the Banach fixed-point theorem asserts that it has a unique fixed point $\bm{u}$ in $\bm{\mathcal{U}}$ and clearly, $\bm{u}$ satisfying $\bm{u}=\bm{\Gamma}(\bm{u})$ solves \eqref{Nonlocal quasilinear systems}.
	
	Until now, we proved the existence of solutions of \eqref{Nonlocal quasilinear systems}. For the uniqueness of the solutions to \eqref{Nonlocal quasilinear systems}, it follows with similar arguments in the proof of \cite{Lei2021,Lei2021a} for nonlocal linear systems, or else directly from the Schauder estimate for the nonlocal, homogeneous, linear, strongly parabolic system with initial value zero, which drives the difference of any two solutions $\bm{u}$ and $\overline{\bm{u}}$ in $\bm{\Omega}^{(2r+\alpha)}_{[0,\delta]}$ to the quasilinear system \eqref{Nonlocal quasilinear systems}.
\end{proof}

Since the quasilinearization method in Subsection \ref{sec:degeneration} requires the solutions of \eqref{Nonlocal fully-nonlinear systems} and \eqref{Nonlocal quasilinear systems} to be $(2r+1)$-continuously differentiable with respect to $y$ such that the degeneracy transformation is available, we need to improve appropriately the regularities of $\bm{A}^I$, $\bm{B}^I$, $\bm{f}$ and $\bm{g}$ such that the requirement is fulfilled.  

\begin{corollary} \label{Higher regularities} 
	Suppose that Assumption \ref{assumption} holds while $\bm{g}\in\bm{\Omega}^{{(2r+1+\alpha)}}_{[0,T]}$ and $\mathbb{H}$ is required to be $(\frac{1+\alpha}{2r},1+\alpha)$-H\"{o}lder continuity with respect to $s$ and $y$ uniformly. Then, there exist $\delta>0$ and a unique $\bm{u}\in\bm{\Omega}^{{(2r+1+\alpha)}}_{[0,\delta]}$ satisfying \eqref{Nonlocal quasilinear systems} in $\Delta[0,\delta]\times\mathbb{R}^d$.  
\end{corollary}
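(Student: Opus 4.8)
The plan is to re-run the fixed-point scheme of Theorem~\ref{Well-posedness of quasilinear systems} one notch higher on the regularity scale, i.e.\ in $\bm{\Omega}^{(2r+1+\alpha)}_{[0,\delta]}$ in place of $\bm{\Omega}^{(2r+\alpha)}_{[0,\delta]}$. The only structural change is the index in the Schauder estimate of Theorem~\ref{Schauder estimates}, which I would now invoke with $k=1$; this is admissible since $1\le 2r-1$ for every positive integer $r$, and it turns coefficients and right-hand side lying in $\bm{\Omega}^{(1+\alpha)}_{[0,\delta]}$ together with initial data in $\bm{\Omega}^{(2r+1+\alpha)}_{[0,\delta]}$ into a unique solution in $\bm{\Omega}^{(2r+1+\alpha)}_{[0,\delta]}$ with the corresponding a~priori bound. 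Accordingly I would define $\bm{\Gamma}$ by the same linearized problem \eqref{Mapping of Nonlocal quasilinear systems}, now on the closed ball $\bm{\mathcal{U}}=\{\bm{u}\in\bm{\Omega}^{(2r+1+\alpha)}_{[0,\delta]}:\bm{u}(t,0,y)=\bm{g}(t,y),\ \lVert\bm{u}-\bm{g}\rVert^{(2r+1+\alpha)}_{[0,\delta]}\le R\}$ (the localization hypothesis on the range of $(\partial_I\bm{g})_{|I|\le 2r-1}$ being inherited from Theorem~\ref{Well-posedness of quasilinear systems}), and perform the same three calibrations: range control so the arguments of $\mathbb{H}$ stay in $B(\overline z,R_0)$; the $\frac{1}{2}$-contraction estimate obtained by applying Theorem~\ref{Schauder estimates} with $k=1$ to the difference system \eqref{Difference between U and U hat}; and the self-mapping estimate $\lVert\bm{\Gamma}(\bm{u})-\bm{g}\rVert^{(2r+1+\alpha)}_{[0,\delta]}\le\frac{R}{2}+\lVert\bm{\Gamma}(\bm{g})-\bm{g}\rVert^{(2r+1+\alpha)}_{[0,\delta]}$. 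Choosing $R$ large and $\delta$ small to meet all three, the Banach fixed-point theorem produces the desired $\bm{u}\in\bm{\Omega}^{(2r+1+\alpha)}_{[0,\delta]}$.

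The content to be checked, and what the upgraded hypothesis is for, is that the frozen-coefficient maps $\bm{u}\mapsto\bm{A}^I(\bm{u})$, $\bm{u}\mapsto\bm{B}^I(\bm{u})$, $\bm{u}\mapsto\bm{F}(\bm{u})$ now take $\bm{\mathcal{U}}$ into $\bm{\Omega}^{(1+\alpha)}_{[0,\delta]}$ rather than merely $\bm{\Omega}^{(\alpha)}_{[0,\delta]}$. For $\bm{u}\in\bm{\Omega}^{(2r+1+\alpha)}_{[0,\delta]}$ the inner arguments $(\partial_J\bm{u})_{|J|\le 2r-1}$ evaluated at $(t,s,y)$ and at $(s,s,y)$, together with their $t$-derivatives, belong to $\bm{\Omega}^{(2+\alpha)}_{[0,\delta]}\hookrightarrow\bm{\Omega}^{(1+\alpha)}_{[0,\delta]}$; differentiating the composition once in $y$ by the chain rule brings in $\bm{\mathcal{H}}_{\bm{\mathcal{X}}}$ and one further $y$-derivative of an inner argument, which has order at most $2r$ and is thus still controlled by $\lVert\bm{u}\rVert^{(2r+1+\alpha)}_{[0,\delta]}$; and the $(1+\alpha)$-H\"{o}lder modulus of the composition in $(s,y)$ --- whose $s$-part is of exponent exactly $\frac{1+\alpha}{2r}$ --- is bounded precisely by the strengthened $(\frac{1+\alpha}{2r},1+\alpha)$-H\"{o}lder continuity now assumed for $\mathbb{H}\in\{\bm{\mathcal{H}},\bm{\mathcal{H}}_{\bm{\mathcal{X}}},\bm{\mathcal{H}}_{\bm{\mathcal{XY}}}\}$, combined with the standard product/interpolation inequalities for the norms $|\cdot|^{(l)}$. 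The $t$-derivative of the composition is treated the same way using $\bm{\mathcal{H}}_t$ and the mixed $t$-$z$ derivatives of $\bm{\mathcal{H}}$ from Table~\ref{tab:table2}, together with the fact that $\bm{u}\in\bm{\Omega}^{(2r+1+\alpha)}_{[0,\delta]}$ also controls $\bm{u}_t$. The second derivatives $\bm{\mathcal{H}}_{\bm{\mathcal{XY}}}$ enter exactly as in the proof of Theorem~\ref{Well-posedness of quasilinear systems}: they ensure that the segment-averaged coefficients $\int_0^1\partial_J\bm{A}^I(\bm{\theta}_\sigma)\,d\sigma$ etc.\ appearing in \eqref{Difference between U and U hat} are $(1+\alpha)$-H\"{o}lder in $(s,y)$, so that the difference problem is a bona fide nonlocal linear system of the type covered by Theorem~\ref{Schauder estimates} with $k=1$, yielding $\lVert\bm{\Gamma}(\bm{u})-\bm{\Gamma}(\widehat{\bm{u}})\rVert^{(2r+1+\alpha)}_{[0,\delta]}\le C(R)\delta^{\alpha/2r}\lVert\bm{u}-\widehat{\bm{u}}\rVert^{(2r+1+\alpha)}_{[0,\delta]}$.

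Uniqueness in $\bm{\Omega}^{(2r+1+\alpha)}_{[0,\delta]}$ follows as before: the difference of two solutions solves the homogeneous nonlocal linear strongly parabolic system with zero initial data, which has only the trivial solution by Theorem~\ref{Schauder estimates}; and for consistency one notes that, since $\bm{\Omega}^{(2r+1+\alpha)}_{[0,\delta]}\subset\bm{\Omega}^{(2r+\alpha)}_{[0,\delta]}$, this solution coincides on $\Delta[0,\delta]\times\mathbb{R}^d$ with the one furnished by Theorem~\ref{Well-posedness of quasilinear systems}. The anticipated main obstacle is entirely technical rather than conceptual: assembling the Moser-type composition estimates in the nonlocal H\"{o}lder spaces at the $(2r+1+\alpha)$ level --- keeping careful track that no derivative of order exceeding $2r$ ever survives the chain rule, and that every H\"{o}lder seminorm generated is one of those granted by the strengthened assumption on $\mathbb{H}$ --- after which the remainder of the argument is a line-by-line transcription of the proof of Theorem~\ref{Well-posedness of quasilinear systems}.
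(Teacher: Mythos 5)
Your proposal is correct and takes exactly the route the paper intends: the paper omits the proof of Corollary~\ref{Higher regularities}, stating only that it follows by repeating the proof of Theorem~\ref{Well-posedness of quasilinear systems} with the higher regularity level nested in Theorem~\ref{Schauder estimates}, and your argument is precisely that transcription with $k=1$ together with the (standard but not written out in the paper) composition estimates justifying that the frozen coefficients land in $\bm{\Omega}^{(1+\alpha)}_{[0,\delta]}$.
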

Since the consideration of higher levels of regularity is nested in Theorem \ref{Schauder estimates}, the proof of Corollary \ref{Higher regularities} follows directly by the proof of Theorem \ref{Well-posedness of quasilinear systems} and is thus omitted.

Consequently, the solvability of nonlocal quasilinear systems \eqref{Nonlocal quasilinear systems} implies the well-posedness of a class of nonlocal fully-nonlinear systems \eqref{Nonlocal fully-nonlinear systems} with smooth enough nonlinearity $\bm{F}$ and initial condition $\bm{g}$. In addition, the well-posedness results of nonlocal semilinear systems \eqref{Nonlocal semilinear systems} can be obtained similarly and the proof is simpler due to the independence of $\bm{A}^I$ and $\bm{B}^I$ of $\bm{u}$. 


\subsection{A Variant of Quasilinearization for the Well-posedness Results} \label{sec:Variant} 
In this subsection, we illustrate a variant of the quasilinearization method, which provides more insights into the related studies. In Subsection \ref{sec:degeneration}, the quasilinearization is achieved by considering the differentiation with respect to the spatial variables. Here, we consider a quasilinearization with differentiating with respect to the temporal variables and in the nonlocal framework, they refer to $(t,s)$ in \eqref{Nonlocal fully-nonlinear systems}. We still set the goal to prove the well-posedness of \eqref{Nonlocal fully-nonlinear systems} or the degenerated nonlocal quasilinear systems.

For simplicity, we consider again the simplied version of nonlocal fully-nonlinear systems \eqref{Simplied nonlocal fully-nonlinear equation}. Suppose that $u$ is a solution of \eqref{Simplied nonlocal fully-nonlinear equation} with twice continuously differentiability with respect to $s$, then the corresponding $\left(\frac{\partial u}{\partial s},\frac{\partial u}{\partial t}\right)(t,s,y)$ satisfy the following nonlocal system: 
\begin{equation*}  
	\left\{
	\begin{array}{l}
		\left(\frac{\partial u}{\partial s}\right)_s(t,s,y)=F_s(u)+\sum^d\limits_{i,j=1}F_{q_{ij}}(u)\left(\frac{\partial u}{\partial s}\right)_{y_iy_j}(t,s,y) \\
		
		\qquad\qquad\qquad\quad +\sum^d\limits_{i,j=1}F_{n_{ij}}(u)\left(\frac{\partial u}{\partial s}\right)_{y_iy_j}(s,s,y)+\sum^d\limits_{i,j=1}F_{n_{ij}}(u)\left(\frac{\partial u}{\partial t}\right)_{y_iy_j}(s,s,y), \\
		
		\left(\frac{\partial u}{\partial t}\right)_s(t,s,y)=F_t(u)+\sum^d\limits_{i,j=1}F_{q_{ij}}(u)\left(\frac{\partial u}{\partial t}\right)_{y_iy_j}(t,s,y), \\
		
		\left(\frac{\partial u}{\partial s},\frac{\partial u}{\partial t}\right)(t,0,y)=\left(F\big(t,0,y,g_{yy}(t,y),g_{yy}(0,y)\big),g_t(t,y)\right),\\
		\hfill 0\leq s\leq t\leq T,\quad y\in\mathbb{R}^d. 
	\end{array}
	\right. 
\end{equation*} 
Inspired by the system above, we construct a mapping $u\mapsto\mathcal{M}(u):=\left(w^{(1)},w^{(2)}\right)$, where $\left(w^{(1)},w^{(2)}\right)$ solves the following coupled linear system: 
\begin{equation} \label{Mapping from u to us and ut} 
	\left\{
	\begin{array}{l}
		w^{(1)}_s(t,s,y)=F_s(u)+\sum^d\limits_{i,j=1}F_{q_{ij}}(u)w^{(1)}_{y_iy_j}(t,s,y) \\
		
		\qquad\qquad\qquad +\sum^d\limits_{i,j=1}F_{n_{ij}}(u)w^{(1)}_{y_iy_j}(s,s,y)+\sum^d\limits_{i,j=1}F_{n_{ij}}(u)w^{(2)}_{y_iy_j}(s,s,y), \\
		
		w^{(2)}_s(t,s,y)=F_t(u)+\sum^d\limits_{i,j=1}F_{q_{ij}}(u)w^{(2)}_{y_iy_j}(t,s,y), \\
		
		\left(w^{(1)},w^{(2)}\right)(t,0,y)=\left(F\big(t,0,y,g_{yy}(t,y),g_{yy}(0,y)\big),g_t(t,y)\right),\\
		\hfill 0\leq s\leq t\leq T,\quad y\in\mathbb{R}^d. 
	\end{array}
	\right. 
\end{equation} 
It is clear that given a regular enough $u$, the second equation of \eqref{Mapping from u to us and ut} is a conventional local parabolic PDE for $w^{(2)}$ parameterized by $t$. After acquiring the solution of $w^{(2)}$, the first equation of \eqref{Mapping from u to us and ut} is a standard nonlocal linear PDE for $w^{(1)}$. From another viewpoint, the system \eqref{Mapping from u to us and ut} can be considered as a nonlocal linear system for $\left(w^{(1)},w^{(2)}\right)$ of the form \eqref{Nonlocal linear systems}. Hence, the mapping $\mathcal{M}$ is well-defined given the well-posedness of nonlocal linear systems \eqref{Nonlocal linear systems}.

By applying the Schauder estimate \eqref{Estimates of solutions of nonlocal linear PDEs} to \eqref{Mapping from u to us and ut}, we have 
\begin{equation*}
	\max\left\{\lVert w^{(1)}\rVert^{(2+\alpha)}_{[0,\delta]},\lVert w^{(2)}\rVert^{(2+\alpha)}_{[0,\delta]}\right\}\leq\lVert\left(w^{(1)},w^{(2)}\right)\rVert^{(2+\alpha)}_{[0,\delta]}\leq C(R),  
\end{equation*}
provided that $u$ belongs to a closed ball $B(g,R)$ of $\Omega^{(2+\alpha)}_{[0,\delta]}$ center at $g$ with radius $R$, where $\Omega^{(2+\alpha)}_{[0,\delta]}$ is the one-dimensional version of $\bm{\Omega}^{(2+\alpha)}_{[0,\delta]}$, i.e. $m=1$. For any $u$, $\widehat{u}\in B(g,R)$, we consider  
\begin{equation*}  
	\left\{
	\begin{array}{lr}
		\left(w^{(1)}-\widehat{w}^{(1)}\right)_s(t,s,y)=F_s(u)-F_s(\widehat{u}) \\
		\qquad\qquad\qquad\qquad\qquad\qquad
		+\sum^d\limits_{i,j=1}F_{q_{ij}}(u)\left(w^{(1)}-\widehat{w}^{(1)}\right)_{y_iy_j}(t,s,y) \\
		\qquad\qquad\qquad\qquad\qquad\qquad +\sum^d\limits_{i,j=1}\left(F_{q_{ij}}(u)-F_{q_{ij}}(\widehat{u})\right)\widehat{w}^{(1)}_{y_iy_j}(t,s,y), \\
		\qquad\qquad\qquad\qquad\qquad\qquad +\sum^d\limits_{i,j=1}F_{n_{ij}}(u)\left(w^{(1)}-\widehat{w}^{(1)}\right)_{y_iy_j}(s,s,y) \\
		\qquad\qquad\qquad\qquad\qquad\qquad +\sum^d\limits_{i,j=1}\left(F_{n_{ij}}(u)-F_{n_{ij}}(\widehat{u})\right)\widehat{w}^{(1)}_{y_iy_j}(s,s,y), \\
		\qquad\qquad\qquad\qquad\qquad\qquad +\sum^d\limits_{i,j=1}F_{n_{ij}}(u)\left(w^{(2)}-\widehat{w}^{(2)}\right)_{y_iy_j}(s,s,y) \\
		\qquad\qquad\qquad\qquad\qquad\qquad +\sum^d\limits_{i,j=1}\left(F_{n_{ij}}(u)-F_{n_{ij}}(\widehat{u})\right)\widehat{w}^{(2)}_{y_iy_j}(s,s,y), \\
		
		\left(w^{(2)}-\widehat{w}^{(2)}\right)_s(t,s,y)=F_t(u)-F_t(\widehat{u}) \\
		\qquad\qquad\qquad\qquad\qquad\qquad +\sum^d\limits_{i,j=1}F_{q_{ij}}(u)\left(w^{(2)}-\widehat{w}^{(2)}\right)_{y_iy_j}(t,s,y) \\
		\qquad\qquad\qquad\qquad\qquad\qquad +\sum^d\limits_{i,j=1}\left(F_{q_{ij}}(u)-F_{q_{ij}}(\widehat{u})\right)\widehat{w}^{(2)}_{y_iy_j}(t,s,y), \\
		
		\left(w^{(1)}-\widehat{w}^{(1)},w^{(2)}-\widehat{w}^{(2)}\right)(t,0,y)=(0,0),\qquad 0\leq s\leq t\leq T,\quad y\in\mathbb{R}^d. 
	\end{array}
	\right. 
\end{equation*}
which is a nonlocal linear system of $\left(w^{(1)}-\widehat{w}^{(1)},w^{(2)}-\widehat{w}^{(2)}\right)$ given $u$, $\widehat{u}$ and $(\widehat{w}^{(1)},\widehat{w}^{(2)})$. Consequently, again by the Schauder estimate \eqref{Estimates of solutions of nonlocal linear PDEs}, we have 
\begin{eqnarray*}
	\max\left\{\lVert w^{(1)}-\widehat{w}^{(1)}\rVert^{(2+\alpha)}_{[0,\delta]},\lVert w^{(2)}-\widehat{w}^{(2)}\rVert^{(2+\alpha)}_{[0,\delta]}\right\} &\leq&\lVert\left(w^{(1)}-\widehat{w}^{(1)},w^{(2)}-\widehat{w}^{(2)}\right)\rVert^{(2+\alpha)}_{[0,\delta]} \\
	&\leq& C(R)\lVert u-\widehat{u}\rVert^{(2+\alpha)}_{[0,\delta]}.
\end{eqnarray*}

Next, we introduce another operator $\mathcal{N}((w^{(1)},w^{(2)}))=U$, where $U$ is given by 
\begin{equation*}
	U(t,s,y)=g(t,y)+\int^s_0 w^{(1)}(t,\tau,y)d\tau. 
\end{equation*}
It is clear that the operator $\mathcal{N}$ is well-defined and $U_t(t,s,y)=g_t(t,y)+\int^s_0 w^{(1)}_t(t,\tau,y)d\tau$. Hence, by choosing suitable $R$ and $\delta$, we have a priori estimate 
\begin{equation*}
	\begin{split}
		\lVert U-\widehat{U}\rVert^{(2+\alpha)}_{[0,\delta]}
		&\leq \left(\delta+\delta^{1-\frac{\alpha}{2}}\right)\lVert w^{(1)}-\widehat{w}^{(1)}\rVert^{(2+\alpha)}_{[0,\delta]}\leq 
		C(R)\left(\delta+\delta^{1-\frac{\alpha}{2}}\right)\left\lVert u-\widehat{u}\right\rVert^{(2+\alpha)}_{[0,\delta]} \\
		&\leq
		\frac{1}{2}\left\lVert u-\widehat{u}\right\rVert^{(2+\alpha)}_{[0,\delta]}.
	\end{split}
\end{equation*}
Therefore, the compound operator $U=\mathcal{N}\circ\mathcal{M}(u)$ can be constructed as a $\frac{1}{2}$-contraction mapping $B(g,R)$ to itself. It is easy to see that the fixed point $u$ satisfying $u=\mathcal{N}\circ\mathcal{M}(u)$ solves \eqref{Simplied nonlocal fully-nonlinear equation}. The uniqueness follows straightforwardly with arguments as in the proof of Theorem \ref{Well-posedness of quasilinear systems}. 

As a closing remark of this variant of quasilinearization method, we note that $u$ is required to be twice continuously differentiable with respect to $s$. In the classical PDE theory, it can be achieved easily by improving the regularities of coefficients, inhomogeneous term, and initial data. However, for nonlocal linear systems \eqref{Nonlocal linear systems}, there is a restriction in Theorem \ref{Schauder estimates}, $k\leq 2r-1$, leading to that the second-order derivatives of the solutions of \eqref{Nonlocal linear systems} with respect to $s$ are not guaranteed to be well-defined. This limitation is mainly caused by the fact that the study of nonlocal systems so far only requires a continuously differentiable property for the external temporal variable $t$ of prospective solutions $\bm{u}(t,s,y)$. We believe that the twice continuous differentiablity with respect to $s$ can be achieved by improving the regularity regarding $t$, which is put into our research agenda. 


\section{Conclusions} \label{sec:conclusion}
This paper studies nonlocal parabolic systems originated from time-inconsistent problems in the theory of stochastic differential equations and control/game problems. Instead of directly proving the well-posedness of nonlocal fully-nonlinear systems, we proposed quasilinearization methods that convert the nonlocal systems from fully-nonlinear to quasilinear ones. By leveraging on the results in \cite{Lei2021a}, we showed the equivalence of solvabilities between the nonlocal fully-nonlinear systems and the associated nonlocal quasilinear systems. We also adopt fixed-point arugments to show the well-posedness of a general class of the nonlocal quasilinear systems, which immediately implies that of the nonlocal fully-nonlinear systems.


%



\bibliographystyle{siamplain}
\bibliography{NonlocalityRef}
\end{document}